\theoremstyle{plain}
\newtheorem{thm}{Theorem}[section]
\newtheorem{lemma}[thm]{Lemma}
\newtheorem{prop}[thm]{Proposition}
\theoremstyle{definition}
\theoremstyle{remark}
\newtheorem{remark}[thm]{Remark}
\newcommand{\nc}{\newcommand}
\def\makeop#1{\expandafter\def\csname#1\endcsname
  {\mathop{\rm #1}\nolimits}\ignorespaces}
\def\makebb#1{\expandafter\def
  \csname bb#1\endcsname{{\mathbb{#1}}}\ignorespaces}
\def\makebf#1{\expandafter\def\csname bf#1\endcsname{{\bf
      #1}}\ignorespaces}
\def\makegr#1{\expandafter\def
  \csname gr#1\endcsname{{\mathfrak{#1}}}\ignorespaces}
\def\makescr#1{\expandafter\def
  \csname scr#1\endcsname{{\EuScript{#1}}}\ignorespaces}
\def\makecal#1{\expandafter\def\csname cal#1\endcsname{{\mathcal
      #1}}\ignorespaces}
\def\doLetters#1{#1A #1B #1C #1D #1E #1F #1G #1H #1I #1J #1K #1L #1M
                 #1N #1O #1P #1Q #1R #1S #1T #1U #1V #1W #1X #1Y #1Z}
\def\doletters#1{#1a #1b #1c #1d #1e #1f #1g #1h #1i #1j #1k #1l #1m
                 #1n #1o #1p #1q #1r #1s #1t #1u #1v #1w #1x #1y #1z}
     \def\qed{\qedmark\medbreak}%
\def\qedmark{{\enspace\vrule height 6pt width 5pt depth 1.5pt}}%
\newcommand{\Z}{\mathbb Z}
\nc{\embed}{\hookrightarrow}
\nc{\ol}{\overline}
\nc{\wt}{\widetilde}
\nc{\opp}{\mathrm{opp}}
\def\ul{\underline}
\begin{document}
\renewcommand{\thefootnote}{\fnsymbol{footnote}}
\setcounter{footnote}{-1}
\numberwithin{equation}{section}


\title[]
{Monomial, Gorenstein and Bass Orders}
\author{Tse-Chung Yang and Chia-Fu Yu}

\address{
(Yang) Department of Mathematics, National Taiwan University\\
Astronomy Mathematics Building \\
No. 1, Roosevelt Rd. Sec. 4 \\
Taipei, Taiwan, 10617}
\email{d97221007@ntu.edu.tw}

\address{
(Yu) Institute of Mathematics, Academia Sinica and NCTS (Taipei Office)\\
Astronomy Mathematics Building \\
No. 1, Roosevelt Rd. Sec. 4 \\
Taipei, Taiwan, 10617}
\email{chiafu@math.sinica.edu.tw}

\date{\today}

\def\c{{\rm c}}
\def\i{{\rm i}}
\def\Mat{{\rm Mat}}

\begin{abstract}
In this article we study a class of orders called {\it monomial
orders} in a central simple algebra over a non-Archimedean local
field. Monomial orders are easily represented and they may be also
viewed as a direct generalization of Eichler orders
in quaternion algebras. A criterion for monomial orders to be
Gorenstein or to be Bass is given.
It is shown that a monomial order is Bass if and only if it is either a
hereditary or an Eichler order of period two.
\end{abstract}

\maketitle


\section{Introduction}
\label{sec:01}
In the integral theory for central simple algebras over
non-Archimedean local fields, we have the following important classes of
orders: maximal orders, hereditary orders, Bass orders and Gorenstein
orders. For definitions and basic properties of Bass and Gorenstein
orders, see \cite[Section 37]{curtis-reiner:1},
\cite[Chapter IX] {roggenkamp:2} or the original paper by Drozd,
Kirichenko and Roiter \cite{DKR:1967} (also see Section~\ref{sec:33}).
Many theories and relations
are investigated by many authors
for understanding these classes of orders, as well as
their module structures.
It is well-known that they form
the following proper inclusions:
\begin{center}
  (maximal orders) $\subset$ (hereditary orders)
                   $\subset$ (Bass orders) $\subset$ (Gorenstein orders)
\end{center}

Let $k$ be a non-Archimedean local field with ring of integers
$\calO$. In this article we introduce a class of $\calO$-orders in a
central simple algebra $A$ over $k$, which we call {\it monomial
orders}. To define them, let us write $A=\Mat_n(D)$,
where $D$ is a central division
algebra over $k$. Let $\calO_D$ be the unique maximal order in $D$
and $\grP$ be the unique maximal ideal of $\calO_D$. For any
matrix $\ul m$ in $\Mat_n(\Z)$, put
\begin{equation}
  \label{eq:1.1}
\Mat_n(\calO_D,\ul m):=\{(a_{ij})\in \Mat_n(D)\, ; \,
a_{ij}\in \grP^{m_{ij}}\ \forall\, 1\le i,j\le n \, \}.
\end{equation}
We call $\ul m$ the {\it level} of the $\calO_D$-module $\Mat_n(\calO_D,\ul
m)$. In order that $\Mat_n(\calO_D, \ul m)$ forms an order the matrix
$\ul m$ must satisfy the {\it order condition}: $m_{ii}=0$ for all
$1\le i \le n$ and $m_{ik}\le m_{ij}+m_{jk}$
for all $1\le i,j,k\le n$.
An $\calO$-order $R$ in $A$ that is isomorphic to
$\Mat_n(\calO_D, \ul m)$ for some $\ul m$ is called a {\it monomial
order}. 
We call $\Mat_n(\calO_D, \ul m)$ the standard monomial order of level $\ul
m$ in $\Mat_n(D)$.
Note that the level is not an intrinsic invariant
of a monomial order; a standard monomial order $\Mat_n(\calO,\ul m)$
may be conjugate to another $\Mat_n(\calO_D, \ul m')$ with different
level $\ul m'$.
The class of monomial orders may be viewed as a
direct generalization of Eichler orders in quaternion
algebras to central simple algebras.
We shall discuss properties of monomial orders and therefore
we shall consider only the standard ones. Clearly one has the
following proper inclusions:
\begin{center}
  (maximal orders) $\subset$ (hereditary orders)
                   $\subset$ (monomial orders)
\end{center}

The main content of this paper is the determination of
which monomial orders that are Gorenstein or Bass.
One basic problem studied by
Janusz~\cite{janusz:tpo1979} and extended by Hijikata and
Nishida~\cite{hijikata-nishida:tensor} is to determine
which tensor product order $R_1\otimes_{\calO} R_2$ of two orders
$R_1$ and $R_2$ is hereditary.
Naturally one may consider the same question where
the adjective ``hereditary'' is replaced by ``Bass'' or ``Gorenstein''.
Our investigation starts by a key observation due
to J.-K. Yu \cite{jk:cyclic} which asserts that
any hereditary order or the tensor
product of any two hereditary orders, even of two monomial orders,
locally in the etale topology looks like a monomial order.
That is, for such orders $R$,
there exists a finite etale extension $\calO'$ of $\calO$ such that
the base change $R\otimes_\calO \calO'$ becomes a monomial order.
Therefore, monomial orders are, up to a suitable base change,
stable under the tensor product.
Studying monomial orders and their properties may provide a
tool or interesting examples that emerge previous extensive studies
in the integral theory of central simple algebras over local fields.

Recall an $\calO$-order
$R$ in a semi-simple and separable $k$-algebra
is said to be {\it Gorenstein} if every short exact
sequence of right (or equivalently, left) $R$-lattices
\[ 0\to R\to M\to N\to 0 \]
splits; see \cite[p. 776 and Prop. 37.8, p. 778]{curtis-reiner:1}. We
prove the following result.

\begin{thm}\label{1.1}
  Let $R=\Mat_n(\calO_D, \ul m)$ be a monomial order of level $\ul
  m=(m_{ij})$ in $\Mat_n(D)$. Then the order $R$ is
  Gorenstein if and only if for each $1\le i\le n$ there exists an
  integer $c(i)$ such that the column vector
  $[-m_{i1}+c(i),\dots ,-m_{in}+c(i)]^t\in \Z^n$ is
  equal to a column of $\ul m$.
\end{thm}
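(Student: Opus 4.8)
The plan is to prove the theorem through the standard duality characterization of the Gorenstein property: an $\calO$-order $R$ in the separable $k$-algebra $A$ is Gorenstein if and only if its dual lattice
\[ R^{\#}:=\{\,y\in A : \Trd_{A/k}(yR)\subseteq \calO\,\} \]
(taken with respect to the nondegenerate reduced trace pairing $\langle x,y\rangle=\Trd_{A/k}(xy)$) is projective as a left $R$-module, equivalently as a right $R$-module; this is the reformulation of \cite[Prop. 37.8]{curtis-reiner:1} through the bimodule isomorphism $R^{\#}\cong\Hom_\calO(R,\calO)$. So the first step is to compute $R^{\#}$ explicitly. Since $\Trd_{A/k}((x_{ij})(y_{ij}))=\sum_{i,j}\Trd_{D/k}(x_{ij}y_{ji})$ and the entries of a matrix in $R$ vary independently, the pairing splits entrywise; using that the $\Trd_{D/k}$-dual of $\grP^{m}$ equals $\grP^{-d-m}$, where $d$ is the valuation of the different of $\calO_D/\calO$, I obtain $R^{\#}=\Mat_n(\calO_D,\ul m^{\#})$ with $m^{\#}_{ij}=-d-m_{ji}$. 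In particular $R^{\#}$ is again a two-sided $R$-lattice of ``monomial'' shape, and one checks directly from the triangle inequality on $\ul m$ that it is stable under the two-sided $R$-action.

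The second step is to describe the indecomposable projective left $R$-modules and their isomorphism classes. Writing elements of $D^n$ as columns, the $j$-th column subspace gives a left $R$-lattice $P_j=\{v : v_l\in\grP^{m_{lj}}\}$, and decomposing $R$ into its columns exhibits $R=\bigoplus_{j=1}^n P_j$; hence the $P_j$ are exactly the indecomposable projective left $R$-modules. Because $D^n$ is the unique simple $A$-module and $P_i\otimes_\calO k = D^n$, every $R$-homomorphism between two such column lattices extends to an $A$-endomorphism of $D^n$, hence is right multiplication by an element of $D$; reading off valuations then shows that $P_i\cong P_j$ if and only if the $j$-th column of $\ul m$ equals the $i$-th column plus a constant integer vector. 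The same rank argument (a lattice $L$ with $L\otimes_\calO k$ simple cannot split) shows each column lattice is indecomposable.

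Finally I would assemble these ingredients. Decomposing $R^{\#}=\bigoplus_{i} Q_i$ into its columns $Q_i=\{v : v_l\in\grP^{-d-m_{il}}\}$, the module $R^{\#}$ is projective if and only if each indecomposable summand $Q_i$ is projective, i.e.\ isomorphic to some $P_j$; here I invoke the Krull--Schmidt theorem for lattices over the complete order $R$. By the isomorphism criterion above, $Q_i\cong P_j$ holds exactly when there is an integer $a$ with $-d-m_{il}=m_{lj}+a$ for all $l$, and setting $c(i):=-d-a$ this says precisely that the column vector $[-m_{i1}+c(i),\dots,-m_{in}+c(i)]^t$ is the $j$-th column of $\ul m$. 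Requiring this for every $i$ is the asserted criterion, and I note that the different exponent $d$ conveniently disappears into the free constant $c(i)$. The step I expect to demand the most care is the first one, namely pinning down the precise dual characterization of the Gorenstein property and the identification $R^{\#}\cong\Hom_\calO(R,\calO)$ as one-sided modules, together with the Krull--Schmidt input that reduces projectivity of $R^{\#}$ to the isomorphism of each $Q_i$ with a column of $R$.
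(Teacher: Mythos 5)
Your proposal is correct, and its first half coincides with the paper's own reduction: the equivalence ``$R$ is Gorenstein $\Leftrightarrow$ the $\calO$-dual of $R$ is a projective left $R$-module'' and the reduced-trace computation of that dual are exactly the paper's Lemma~\ref{32} and the discussion preceding it (the paper obtains $R^{*}=(\grP^{-m_{ji}}\calD^{-1})_{ij}$, which is the same lattice as your $R^{\#}$ with $m^{\#}_{ij}=-d-m_{ji}$, and likewise absorbs the principal ideal $\calD^{-1}=\grP^{-d}$ into a left $R$-module isomorphism). Where you genuinely diverge is in deciding when a column of this dual is projective. The paper settles that by an explicit computation (Theorem~\ref{31}): for a projective lattice $M=[\grP^{l_1},\dots,\grP^{l_n}]^t$ it takes the surjection $\varphi\colon R\to M$, $X\mapsto X\mathbf{v}$, chooses a splitting $\psi$, and compares valuations in the identities $\psi(E_{1j}\mathbf{v})=E_{1j}\psi(\mathbf{v})$ to force $M$ to be, up to scalar, a column of $R$; this is elementary and self-contained, classifies \emph{all} projective $R$-lattices in $V=D^n$, and never appeals to Krull--Schmidt. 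You instead argue structurally: the columns $P_j$ of $R$ are projective and indecomposable (your rank/simplicity argument), Krull--Schmidt--Azumaya for lattices over an order over a complete discrete valuation ring shows they exhaust the indecomposable projectives, and the isomorphism criterion coming from $\End_A(D^n)=D$ (acting by right multiplication) converts ``isomorphic to a column'' into ``equal to a column up to an additive integer constant.'' Your route is shorter and avoids the matrix manipulation, at the cost of invoking Krull--Schmidt, which requires completeness of $\calO$ --- available in the paper's setting, and consistent with its remark that all results hold over any complete discrete valuation ring. The final bookkeeping, including your correct observation that the different exponent $d$ disappears into the free constant $c(i)$, matches the paper's derivation of Theorem~\ref{33}.
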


We say a (standard) monomial order $\Mat_n(\calO_D, \ul m)$ of level
$\ul m$ {\it upper triangular} if $m_{ij}=0$ for all $1\le i<j\le n$.
We call this order {\it an Eichler order} if it is upper triangular
and for any $i>j$ the integer $m_{ij}$ is either zero
or equals a fixed positive integer.
The following result gives a criterion for an upper
triangular monomial order to be Gorenstein.

\begin{thm}\label{1.2}
If $R=\Mat_n(\calO_D,\ul m)$ is an upper triangular monomial order,
then $R$ is Gorenstein if and only if $R$ is an Eichler order.
\end{thm}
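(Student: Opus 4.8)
The plan is to deduce Theorem~\ref{1.2} from the general criterion of Theorem~\ref{1.1}. First I would put that criterion in a usable form. If the column $[-m_{i1}+c(i),\dots,-m_{in}+c(i)]^t$ equals the $p$-th column of $\ul m$, then reading off the $i$-th and $p$-th entries (and using $m_{ii}=m_{pp}=0$) forces $c(i)=m_{ip}$; hence $R$ is Gorenstein precisely when, for every $i$, there is an index $p=p(i)$ with
\[
m_{ij}+m_{j,p(i)}=m_{i,p(i)}\qquad\text{for all } j .
\]
Since the order condition already gives $m_{i,p(i)}\le m_{ij}+m_{j,p(i)}$, this says some column simultaneously turns all triangle inequalities out of row $i$ into equalities, and plugging in $j=i,\,p(i)$ shows $m_{i,p(i)}=\max_j m_{ij}$. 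Next I record the combinatorics of upper-triangularity: $m_{ij}=0$ for $i\le j$, $m_{ij}\ge 0$ for $i>j$, and the relation $i\preceq j\Leftrightarrow m_{ij}=0$ is a \emph{total} preorder (reflexive and transitive by the triangle inequality, and total because $i<j$ forces $i\preceq j$). Collapsing its classes $C_1\prec\cdots\prec C_r$, the triangle inequality shows $m_{ij}$ depends only on the classes of $i,j$, giving a function $f$ with $f(a,b)>0$ iff $a>b$, and the reformulated criterion for $R$ translates verbatim into the same criterion for the $r\times r$ order defined by $f$. This reduces the whole problem to the totally ordered case $m_{ij}>0$ for all $i>j$, where it suffices to prove that Gorenstein forces all $m_{ij}$ ($i>j$) to coincide.

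For this reduced statement I would induct on $n$; the cases $n\le 2$ are immediate. Because $m_{1j}=0$, the bound $m_{ij}\le m_{i1}+m_{1j}=m_{i1}$ shows each row attains its maximum in column $1$, so $M_i=m_{i1}$. Row $2$ has its only nonzero entry in column $1$, so $m_{21}\mathbf 1-R_2$ (with $R_i=(m_{i1},\dots,m_{in})^t$) vanishes only at $b=1$; as it must equal a genuine column, which in the totally ordered case vanishes exactly on an initial segment $\{1,\dots,p\}$, the matching column is column $1$, and comparing entries yields $m_{i1}=m_{21}$ for all $i\ge 2$. Thus the first column is a constant $M$ and every row maximum equals $M$. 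The suborder on $\{1,\dots,n-1\}$ is again Gorenstein of this type (for $i\ge 2$ the witness $p(i)<i\le n-1$ survives restriction, and the zero first row matches the zero last column), so by induction all its off-diagonal entries are equal, necessarily to $M$ since its first column is still constant $M$. Finally I apply the criterion to the last row, where $M_n=m_{n1}=M$: if some $b$ satisfied $p(n)<b\le n-1$, the matching-column identity gives $m_{b,p(n)}=M-m_{nb}$, while the inductive conclusion gives $m_{b,p(n)}=M$, forcing $m_{nb}=0$ and contradicting $m_{nb}>0$; hence $p(n)=n-1$ and row $n$ equals $M$ below the diagonal. So all entries equal $M$ and $R$ is an Eichler order.

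For the converse I would use the block description of an Eichler order read off from the total preorder: $m_{ij}=e$ when the class of $i$ exceeds that of $j$, and $m_{ij}=0$ otherwise. Here the criterion is checked by an explicit choice of matching column: for $i$ in a class $C_a$ with $a\ge 2$ take $p(i)$ in the adjacent lower class $C_{a-1}$, and for $i$ in the bottom class take $p(i)$ in the top class $C_r$. A short case analysis on the class of $j$ shows $m_{ij}+m_{j,p(i)}$ is identically $e$ (respectively identically $0$ for the bottom class), which is exactly the condition of Theorem~\ref{1.1}.

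The main obstacle is the reduced statement, namely propagating the purely local Gorenstein equalities into the global conclusion that a single value occurs. The two observations that unlock it are that every row maximum sits in the first column (because $m_{1j}=0$) and that the uniquely supported second row pins the entire first column to one constant; once the first column is constant, the induction together with the last-row analysis forces constancy everywhere. I expect the most delicate points to write carefully to be the reduction to the totally ordered case — verifying that the Gorenstein criterion passes cleanly to the class function $f$ — and the last-row step, where one must track exactly which entries the matching column identity constrains.
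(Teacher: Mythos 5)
Your proof is correct, and it reaches Theorem~\ref{1.2} by a route that, while built on the same two pillars as the paper's proof of Theorem~\ref{42} (the criterion of Theorem~\ref{33} and induction on $n$), is executed quite differently. The shared pivot is the row-$2$ analysis: when $m_{21}\neq 0$ the matching column $[0,m_{21},\dots,m_{21}]^t$ vanishes only in position $1$, so it must be column $1$ of $\ul m$, making the first column constant (in the paper this appears as: the second column of $\ul m'$ in (\ref{eq:41}) must be the first column of $\ul m$). The genuine differences are these. First, you eliminate ties up front by collapsing the classes of the total preorder $i\preceq j\Leftrightarrow m_{ij}=0$ and checking that the criterion descends to the class function $f$, reducing everything to the strictly ordered case $m_{ij}>0$ for $i>j$; the paper keeps ties and treats $m_{21}=0$ as case (a) inside the induction, deleting one tied index per step. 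Second, in the strict case the paper finishes with an \emph{inner} induction on columns $k$ via the containment relation (\ref{eq:415}) (every entry of row $k$ of $\ul m'$ occurs in row $k$ of $\ul m$) and never reuses the outer hypothesis, whereas you invoke the \emph{outer} induction hypothesis on the leading principal $(n-1)\times(n-1)$ submatrix --- legitimate precisely because of the two checks you make, that the witnesses $p(i)<i$ restrict and that row $1$ acquires the new witness $p(1)=n-1$ --- and then close with the last-row argument forcing $p(n)=n-1$. Third, you prove the ``if'' direction concretely, exhibiting for each class a witness column in the adjacent lower class (top class for the bottom class); the paper leaves that direction to the reader. Your class-collapsing buys cleaner combinatorics (no zero entries below the diagonal to track) and makes the Eichler conclusion simply ``$f$ is constant below the diagonal,'' at the cost of verifying that the Gorenstein criterion passes to a quotient and to a principal submatrix; the paper's single-layer induction stays inside one matrix throughout, at the cost of the bookkeeping in (\ref{eq:415}) and its inner induction.
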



The following result characterizes Bass orders in the class of
monomial orders.

\begin{thm}\label{1.3}
  Let $R=\Mat_n(\calO_D, \ul m)$ be as in Theorem~\ref{1.1}.
  Then $R$ is a
  Bass order if and only if either $R$ is a hereditary order,
  or $R$ is an Eichler order of period two,
  i.e. there exist positive integers
  $k_1, k_2, a$ with $k_1+k_2=n$ so that $R \simeq \Mat_n(\calO_D, \ul
  m')$ with
\[ \ul m'= \left[
\begin{array}{cc}
 {\bf 0}^{k_1}_{k_1} &  {\bf 0}^{k_1}_{k_2} \\
a\, {\bf 1}^{k_2}_{k_1} & {\bf 0}^{k_2}_{k_2}
\end{array} \right],\]
where ${\bf 1}^{r}_{s}$ (resp. ${\bf 0}^{r}_{s}$) denotes the $r\times s$ matrix with every
entry equal to $1$ (resp. $0$).
\end{thm}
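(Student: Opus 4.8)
The plan is to use the standard characterization that an $\calO$-order is Bass exactly when every order containing it is Gorenstein, and to feed this through the Gorenstein criterion of Theorem~\ref{1.1}. The enabling observation is that overorders of a monomial order are again monomial: since $m_{ii}=0$, the diagonal matrix units $e_{ii}$ lie in $R$, hence in any overorder $\Gamma\supseteq R$, so $\Gamma=\bigoplus_{i,j}e_{ii}\Gamma e_{jj}$ with each $e_{ii}\Gamma e_{jj}$ a nonzero fractional $\calO_D$-ideal in $e_{ii}\Mat_n(D)e_{jj}\cong D$; writing $e_{ii}\Gamma e_{jj}=\grP^{m'_{ij}}$ realizes $\Gamma=\Mat_n(\calO_D,\ul m')$ with $0=m'_{ii}$, $m'_{ij}\le m_{ij}$, and $\ul m'$ satisfying the order condition. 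Consequently $R$ is Bass if and only if every level matrix $\ul m'\le\ul m$ (entrywise) satisfying the order condition defines a Gorenstein order, a property I can test with Theorem~\ref{1.1}.

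For the sufficiency direction I would dispatch the two families separately. If $R$ is hereditary it is Bass by the inclusion recalled in the introduction. For the period-two Eichler order with level $\ul m$ as displayed, the reduction above pins down the overorders completely: any admissible $\ul m'\le\ul m$ vanishes wherever $\ul m$ does, and the order condition forces the lower-left block to be constant. Concretely, for $i,i'$ in the second index block and $k,k'$ in the first, the inequalities $m'_{ik}\le m'_{ik'}+m'_{k'k}=m'_{ik'}$ and $m'_{ik}\le m'_{ii'}+m'_{i'k}=m'_{i'k}$, together with their symmetric partners, give $m'_{ik}=m'_{i'k'}$. Hence every overorder is again a period-two Eichler order, or the maximal order when the constant is $0$; all of these are Eichler, so Gorenstein by Theorem~\ref{1.2}, and $R$ is Bass.

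The necessity direction is where the real work lies: assuming $R$ is Bass and not hereditary, I must produce the period-two Eichler form. Using the gauge freedom from conjugation by $\diag(\pi_D^{d_1},\dots,\pi_D^{d_n})$, which sends $m_{ij}\mapsto m_{ij}+d_i-d_j$, together with permutations, I would normalize $\ul m$ and reduce to the upper triangular case, where Theorem~\ref{1.2} applies; the content of this reduction is to use the geodesic structure supplied by Theorem~\ref{1.1} to exclude the \emph{cyclic} configurations. In the upper triangular case Theorem~\ref{1.2} identifies $R$ as an Eichler order, with a single positive value $a$ placed on the below-diagonal cross-blocks of a chain of $r$ blocks. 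When $a=1$ this is exactly a hereditary order, so the hypothesis forces $a\ge2$. Finally, if $r\ge3$, I would simultaneously lower all the $a$'s linking the second block to the first down to $a-1$: a short computation shows the result still satisfies the order condition, while its below-diagonal entries now take the two distinct positive values $a-1$ and $a$, so by Theorem~\ref{1.2} it is not Gorenstein, contradicting Bass. Hence $r=2$ and $R$ is a period-two Eichler order.

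I expect this necessity direction to be the main obstacle. Within it, the step reducing an arbitrary Bass order to the upper triangular situation—equivalently, ruling out the cyclic two-valued configurations—is the most delicate, since there the transparent Eichler description of Theorem~\ref{1.2} is not yet available and one must argue directly with the less explicit column condition of Theorem~\ref{1.1} while simultaneously controlling the gauge normalization. A convenient device is the geometric reading of that criterion: Gorenstein-ness says that for each $i$ there is an index $\sigma(i)$ and a constant $c(i)$ with $m_{ij}+m_{j\sigma(i)}=c(i)$ for all $j$, i.e. every index lies on an $m$-geodesic from $i$ to $\sigma(i)$. Tracking how this geodesic structure must persist under the cross-block reductions should make both the reduction and the block count tractable, and organizing the resulting case analysis is the principal bookkeeping challenge.
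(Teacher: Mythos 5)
Your skeleton matches the paper's up to the step you yourself flag as unresolved: Bass means every overorder is Gorenstein; overorders of monomial orders are again monomial (the idempotent argument, which is the paper's Lemma~\ref{21}(2)); sufficiency by classifying overorders of the period-two Eichler order; and, for upper triangular $R$, necessity by exhibiting a non-Gorenstein overorder when $a\ge 2$ and the period is at least $3$. Your overorder (lowering the $(2,1)$-block from $a$ to $a-1$) is a valid variant of the one in the paper's Proposition~\ref{51}, and both rely on Theorem~\ref{42} in the same way. One shared, minor imprecision: an overorder's level need not vanish where $\ul m$ does, since entries can go negative (e.g.\ $\grP^{-1}$ above the diagonal); one first renormalizes by a diagonal conjugation, which is harmless because Gorenstein-ness is conjugation invariant.

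The genuine gap is the reduction of an arbitrary Bass monomial order to the upper triangular case, which you defer to ``geodesic'' bookkeeping with Theorem~\ref{1.1}. As proposed --- arguing from the column criterion applied to $R$ itself, i.e.\ from Gorenstein-ness of $R$ --- this step cannot be completed, because what it would prove is ``Gorenstein $\Rightarrow$ conjugate to upper triangular,'' and that is false. The paper's own list in Section~\ref{sec:34} contains, for $n=4$, the Gorenstein level with rows $(0,0,0,0)$, $(a,0,a,0)$, $(b,b,0,0)$, $(a+b,b,a,0)$ with $a,b>0$; this order is not conjugate to any upper triangular monomial order. (Conjugacy to an upper triangular order forces an ordering $v_1,\dots,v_4$ of the indices and integers $d_i$ with $m_{v_iv_j}=d_j-d_i$ for all $i<j$, hence the additivity $m_{v_iv_k}=m_{v_iv_j}+m_{v_jv_k}$; a finite check shows every ordering fails for this matrix --- consistent with the paper's remark that only for $n\le 3$ is every Gorenstein monomial order triangularizable.) So in this reduction the Bass hypothesis must be converted into concretely constructed overorders, and your proposal supplies no mechanism for doing that. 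The paper's mechanism --- the missing idea --- is truncation of the level at $1$: for Bass $R$, set $m'_{ij}=\min(m_{ij},1)$; the order condition survives truncation, so $R'=\Mat_n(\calO_D,\ul m')$ is an overorder of $R$, hence Bass and in particular Gorenstein (Lemma~\ref{52}); then an induction on $n$ (Proposition~\ref{53}) shows that a Gorenstein monomial order whose level has entries in $\{0,1\}$ is conjugate to an upper triangular one; finally, since truncation preserves the support of $\ul m$, upper-triangularity transfers back to $\ul m$ (the step ``hence so is $\ul m$'' in Theorem~\ref{54}), after which your upper triangular analysis finishes. For the displayed example the truncated order is itself non-Gorenstein, which is exactly how Bass-ness eliminates such cyclic configurations even though Gorenstein-ness permits them. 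Without this truncation device, or some substitute that produces a usable overorder in the non-triangular situation, the necessity direction of your proof remains unproved.
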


\begin{remark}
  Although the setting for the present article is over the ring
  $\calO$ of integers
  in a non-Archimedean local field, all results are also valid if one
  replaces the base ring $\calO$ by any complete discrete valuation ring.
\end{remark}

The paper is organized as follows. Section~\ref{sec:02} introduces
monomial orders and their basic properties which will be used in the
proceeding sections. We prove Theorems~\ref{1.1}, \ref{1.2} and
\ref{1.3} in Sections \ref{sec:03}, \ref{sec:04} and \ref{sec:05},
respectively. 
 
\section{Monomial Orders}
\label{sec:02}
  Let $D$ be a finite dimensional central division algebra over a
  non-Archimedean local field $k,$ and
  $\mathcal{O}_D$ be the valuation ring. Let
  $\mathfrak{P}$ be the unique maximal ideal of $\mathcal{O}_D$ and
  $\pi=\pi_D$ be a uniformizer of $D.$
  In this section, we introduce a class of
  $\mathcal{O}$-orders in $\mathrm{Mat}_n(D)$ which we call
  $\emph{monomial orders.}$
  For any integral matrix $\underline{m} = (m_{ij}) \in
   \mathrm{Mat_n(\mathbb{Z})},$ we set
  $$ \Mat_n(\mathcal{O}_D,\underline{m}) :=
  (\mathfrak{P}^{m_{ij}}) \subset \mathrm{Mat}_n(D).$$
The $\mathcal{O}_D$-submodule
$\mathrm{Mat}_n(\mathcal{O}_D,\underline{m})$ is a subring if and
only if the following conditions hold:
\begin{align*}
(\mathrm{i}&)\  m_{ii} = 0 \  \emph{for all} \  1 \leq i \leq n,\
\mathrm{and}\\
(\mathrm{ii}&)\  \mathrm{for \  all}\  1 \leq i,j,k \leq n,
\mathrm{one\ has\
the\  equality}
\end{align*}
\begin{align}
  \label{eq:21}
  m_{ik} \leq m_{ij} + m_{jk}
\end{align}
We call $\mathrm{Mat}_n(\mathcal{O}_D,\underline{m})$ the
$\emph{standard monomial order of level}\  \underline{m}.$
Let $$T := \mathrm{diag}(\mathcal{O}_D,\ldots,\mathcal{O}_D) \subset
\mathrm{Mat}_n{(\mathcal{O}_D)}$$ be the diagonal
$\mathcal{O}$-subalgebra.

\begin{lemma}\label{21} \
\begin{enumerate}
\item Any $\mathcal{O}_D$-submodule $M$ of $\mathrm{Mat}_n(D)$ which
is stable under the left and right multiplication of $T$ has the
form $\mathrm{Mat}_n(\mathcal{O}_D,\underline{m})$ for some
$\underline{m} \in \mathrm{Mat}_n(\mathbb{Z}).$
\item Any $\mathcal{O}$-order $R$ which contains a standard
monomial order $\mathrm{Mat}_n(\mathcal{O}_D,\underline{m})$ is
again a standard monomial order.
\end{enumerate}
\end{lemma}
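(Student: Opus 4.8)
The plan is to prove part (1) directly by a matrix-unit decomposition, and then to deduce part (2) by checking that any such order $R$ satisfies the hypotheses of part (1).

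For part (1), the key observation is that the diagonal idempotents $e_{11},\dots,e_{nn}$ (standard matrix units) all lie in $T$. For $x\in M$ the element $e_{ii}\,x\,e_{jj}$ is the matrix whose only nonzero entry is $x_{ij}$ in position $(i,j)$; since $M$ is stable under left and right multiplication by $T$, this element lies in $M$. Writing $I_{ij}:=\{a\in D: a\,e_{ij}\in M\}$, it follows that $x_{ij}\in I_{ij}$ for every $x\in M$, and hence $M=\bigoplus_{i,j} I_{ij}\,e_{ij}$. Moreover, for $t=\mathrm{diag}(t_1,\dots,t_n)\in T$ one has $t\,(a\,e_{ij})=(t_i a)\,e_{ij}$ and $(a\,e_{ij})\,t=(a\,t_j)\,e_{ij}$, so each $I_{ij}$ is stable under left and right multiplication by $\mathcal{O}_D$; that is, $I_{ij}$ is a two-sided fractional $\mathcal{O}_D$-ideal in $D$. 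Here I would invoke the only nonformal ingredient: $\mathcal{O}_D$ is a noncommutative discrete valuation ring whose unique maximal two-sided ideal is $\mathfrak{P}=\pi\mathcal{O}_D=\mathcal{O}_D\pi$, and every nonzero full (one-sided, hence two-sided) fractional $\mathcal{O}_D$-ideal equals a unique power $\mathfrak{P}^{m}$, $m\in\mathbb{Z}$. Since $M$ is a full $\mathcal{O}$-lattice in $\mathrm{Mat}_n(D)$---which is what guarantees the exponents are finite integers---each $I_{ij}$ is a nonzero full fractional ideal, so $I_{ij}=\mathfrak{P}^{m_{ij}}$ and therefore $M=\mathrm{Mat}_n(\mathcal{O}_D,\underline{m})$ with $\underline{m}=(m_{ij})$.

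For part (2), I would first note that since the level $\underline{m}$ of $S:=\mathrm{Mat}_n(\mathcal{O}_D,\underline{m})$ satisfies the order condition $m_{ii}=0$, the diagonal entries of $S$ are $\mathfrak{P}^0=\mathcal{O}_D$, so $T\subseteq S\subseteq R$. Because $R$ is a ring containing $T$, closure under multiplication gives $T\,R\subseteq R$ and $R\,T\subseteq R$, i.e. $R$ is stable under left and right multiplication by $T$; and as an $\mathcal{O}$-order it is a full $\mathcal{O}$-lattice. Part (1) then applies and yields $R=\mathrm{Mat}_n(\mathcal{O}_D,\underline{m}')$ for some $\underline{m}'\in\mathrm{Mat}_n(\mathbb{Z})$. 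Finally, since $R$ is a subring containing $1$, its level $\underline{m}'$ automatically satisfies conditions (i) and (ii)---these were identified above as precisely the conditions for $\mathrm{Mat}_n(\mathcal{O}_D,\underline{m}')$ to be a subring---so $R$ is a standard monomial order.

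The substance of the argument is concentrated in the single structural fact that every fractional ideal of $\mathcal{O}_D$ is a power of $\mathfrak{P}$; the remainder is the bookkeeping of the matrix-unit decomposition. The main point demanding care is to track the left versus right $T$-actions so that each $I_{ij}$ is genuinely a full two-sided fractional ideal, together with the standing assumption that $M$ (resp. $R$) is a full lattice, without which the exponents $m_{ij}$ need not be finite; I anticipate no difficulty beyond this.
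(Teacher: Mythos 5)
Your proof is correct and follows essentially the same route as the paper: the paper dismisses part (1) as ``elementary'' (your matrix-unit decomposition $M=\bigoplus_{i,j} I_{ij}e_{ij}$ with each $I_{ij}$ a power of $\mathfrak{P}$ is precisely the standard argument being invoked), and for part (2) both you and the paper observe that $T\subseteq R$ forces $T$-bistability and then apply part (1). Your explicit flagging of the implicit fullness/lattice hypothesis needed to keep the exponents $m_{ij}$ finite is a point the paper's statement glosses over, but it does not change the argument.
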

\begin{proof}
(1) This is elementary. (2) Since
$\mathrm{Mat}_n(\mathcal{O}_D,\underline{m})$ contains $T,$ the
order $R$ is stable under the left and right multiplication of $T.$
Then the statement follows from (1). \qed
\end{proof}

 Let $\widetilde{W} := N_{\mathrm{GL}_n(D)}(T)/T^\times$ be the
 extended Weyl
 group of $\mathrm{GL}_n(D).$ We have an isomorphism $\mathbb{Z}^n
 \rtimes S_n \cong
 \widetilde{W}$ by sending $(a_1,\ldots,a_n)\cdot \sigma$ to
 diag($\pi^{a_1},\ldots,\pi^{a_n})\sigma.$ The group
 $\widetilde{W}$ acts on the set of all standard monomial orders in
 $\mathrm{Mat}_n(D)$ by conjugation. A basic question is to find
 good representatives for isomorphism classes among standard
 monomial orders.

\begin{lemma} \label{22}
Any standard monomial order
$\mathrm{Mat}_n(\mathcal{O}_D,\underline{m})$ is conjugate by an
element of $\widetilde{W}$ to an $\mathcal{O}$-order
$\mathrm{Mat}_n(\mathcal{O}_D,\underline{m'})$ such that $m'_{1j} =
0$ and $m'_{ij} \geq 0$ for $1 \leq i,j \leq n.$
\end{lemma}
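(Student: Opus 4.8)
The plan is to use only the diagonal (torus) part of $\widetilde{W}$; the permutation factor turns out to be unnecessary. The first step is to record how conjugation by a representative $\mathrm{diag}(\pi^{a_1},\ldots,\pi^{a_n})$ of an element $(a_1,\ldots,a_n)\in\mathbb{Z}^n\subset\widetilde{W}$ acts on the level matrix. Writing $v$ for the valuation on $D$ normalized by $v(\pi)=1$, so that $\mathfrak{P}^m=\{x\in D:v(x)\geq m\}$, the multiplicativity $v(xy)=v(x)+v(y)$ yields
\[
\pi^{a_i}\,\mathfrak{P}^{m_{ij}}\,\pi^{-a_j}=\mathfrak{P}^{\,m_{ij}+a_i-a_j}.
\]
Since the $(i,j)$ entry of $\mathrm{diag}(\pi^{a_1},\ldots,\pi^{a_n})\,X\,\mathrm{diag}(\pi^{a_1},\ldots,\pi^{a_n})^{-1}$ is $\pi^{a_i}X_{ij}\pi^{-a_j}$, this shows that conjugation by $\mathrm{diag}(\pi^{a_1},\ldots,\pi^{a_n})$ sends the standard monomial order of level $\underline{m}$ to the one of level $\underline{m'}$ with $m'_{ij}=m_{ij}+a_i-a_j$.

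Next I would choose the exponents so as to clear the first row. Taking $a_i:=m_{1i}$ for $1\le i\le n$ (so in particular $a_1=m_{11}=0$), the new level matrix has entries $m'_{ij}=m_{ij}+m_{1i}-m_{1j}$. Its first row is then $m'_{1j}=m_{1j}+m_{11}-m_{1j}=0$ for all $j$, which is the first desired property.

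It remains to check non-negativity, and this is exactly where the order condition enters. Applying the inequality \eqref{eq:21} to the index triple $(1,i,j)$ gives $m_{1j}\le m_{1i}+m_{ij}$, which rearranges to $m'_{ij}=m_{ij}+m_{1i}-m_{1j}\ge 0$ for all $1\le i,j\le n$. Hence $\mathrm{diag}(\pi^{m_{11}},\ldots,\pi^{m_{1n}})$ conjugates $\mathrm{Mat}_n(\mathcal{O}_D,\underline{m})$ to a standard monomial order enjoying both properties. There is no serious obstacle: once the conjugation formula for the level is in place, the statement is an immediate consequence of the subadditivity built into the order condition. The only point requiring a moment's care is the identity $\pi^{a_i}\mathfrak{P}^m\pi^{-a_j}=\mathfrak{P}^{\,m+a_i-a_j}$, which must be justified through the valuation $v$ rather than by treating $\pi$ as if it were central in $D$.
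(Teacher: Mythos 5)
Your proof is correct and follows essentially the same route as the paper: conjugate by a diagonal element $\mathrm{diag}(\pi^{a_1},\ldots,\pi^{a_n})$ chosen to kill the first row, then deduce $m'_{ij}\geq 0$ from the order condition applied to the triple $(1,i,j)$. The only cosmetic difference is that you apply the inequality to the original level matrix $\underline{m}$ while the paper applies it to the conjugated one $\underline{m}'$; these are equivalent since conjugation preserves the order condition.
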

\begin{proof}
Choose $a_j$ such that $a_j + m_{1j} = 0$ for $2 \leq j \leq n.$
Conjugating by the element diag($\pi^{a_1},\ldots,\pi^{a_n}),$
we get the order $\mathrm{Mat}_n(\mathcal{O}_D,\underline{m'})$ with
$m'_{1j} = 0$ for $2 \leq j \leq n.$ Using the inequality
$$m'_{1j} \leq m'_{1i} + m'_{ij}$$ we have $m'_{ij} \geq 0$
for all $1 \leq i,j \leq n.$ This proves the lemma. \qed
\end{proof}

A standard monomial order
 $\mathrm{Mat}_n(\mathcal{O}_D,\underline{m})$ is said to be of
 $\emph{positive type}$\  if $m_{ij} \geq 0$ for all $i,j$; it is
 said to be $\emph{upper
 triangular}$ if $m_{ij} = 0$ for all $i\leq j$, that is, the matrix
 $\ul m$ is strict lower triangular in the usual sense.
 By Lemma~\ref{22},
 we can reduce the situation to the case of positive type.
 In the remaining of this paper, $R$ is assumed to be a
 standard monomial order of positive type in case we do not mention.

\section{Gorenstein monomial orders}
\label{sec:03}

\subsection{Projective modules}
\label{sec:31}

Let $V=D^n$, viewed as a
right column vector space over $D$ of dimension
  $n.$ So $V$ is a left $\Mat_n(D)=\End_D(V)$-module by usual
  multiplication. Using the property that $T\subset R$,
  one shows that any non-zero left $R$-module $M \subset V$
  has
  the form $[\mathfrak{P}^{l_1},\ldots,\mathfrak{P}^{l_n}]^t$ for some
  integers $l_i$. In this case 
  $M$ is said to be
  of type $[l_1,\ldots,l_n]^t$. By the $R$-module structure of $M$,
  we have the following inequalities
  \begin{align}
  \label{eq:22}
 m_{ij} + l_{j} \geq l_i
\end{align}
for all $1 \leq i,j \leq n.$

Now we shall determine when $M$ is a projective $R$-module.
In fact we shall show that $M$ is a projective $R$-module if and
only if $M$ is, up to scalar in $D$,
a column of the monomial order $R.$
By Lemma~\ref{22}, we may assume that $m_{1j} = 0$ for
all $j = 1,\ldots,n.$ The condition (\ref{eq:22}) gives $l_1 \leq
l_i$ for all $i.$ Replacing $M$ by $\mathfrak{P}^{-l_1}M$, we may
assume that $l_1 = 0.$


\begin{thm}\label{31}
Let $R\subset \Mat_n(D)$ 
be a standard monomial order of level $\underline{m}$ with $m_{1i}=0$ for
all $i=1,\ldots,n$ and $ M =
[\mathfrak{P}^{l_1},\ldots,\mathfrak{P}^{l_n}]^t$ be an $R$-module.
Then $M$ is $R$-projective if and only if it is, up to scalar in $D$,
a column of $R$.
\end{thm}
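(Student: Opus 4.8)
The plan is to prove both directions by identifying the indecomposable projective left $R$-modules with the columns of $R$, and then using that $V$ is a simple $\Mat_n(D)$-module with endomorphism algebra $D$ to pin down each abstract isomorphism as an honest scalar multiple.

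For the ``if'' direction, write $R=\bigoplus_{j=1}^n C_j$ as a left $R$-module, where $C_j\subset \Mat_n(D)$ denotes the set of matrices of $R$ supported on the $j$-th column. Left multiplication preserves columns, so each $C_j$ is a left ideal and a direct summand of the free module $R$; hence $C_j$ is projective. Under the identification of column-$j$-supported matrices with vectors in $V$, the module $C_j$ is exactly the column $[\grP^{m_{1j}},\dots,\grP^{m_{nj}}]^t$, so every column of $R$, and therefore every scalar multiple $C_j d$ with $d\in D^\times$, is $R$-projective.

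For the ``only if'' direction, suppose $M=[\grP^{l_1},\dots,\grP^{l_n}]^t$ (normalized so that $l_1=0$) is projective. First I would show $M$ is indecomposable: since $M$ is a full $\calO$-lattice in $V$, one has $\End_R(M)\otimes_\calO k\cong \End_{\Mat_n(D)}(V)=D$, so $\End_R(M)$ embeds in the division algebra $D$ and thus contains no nontrivial idempotent. As $R$ is a module-finite algebra over the complete local ring $\calO$, it is semiperfect and the Krull--Schmidt theorem holds; every indecomposable projective is a direct summand of $R$, hence isomorphic to one of the $C_j$. Therefore there is an $R$-module isomorphism $\phi\colon C_j\isoto M$.

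The last step, which is the main point, is to upgrade this abstract isomorphism to a scalar. Both $C_j$ and $M$ are full lattices in $V$, so tensoring with $k$ turns $\phi$ into a $\Mat_n(D)$-linear automorphism $\phi\otimes k$ of $V=C_j\otimes_\calO k=M\otimes_\calO k$. Since $V$ is simple with $\End_{\Mat_n(D)}(V)=D$ acting by right multiplication, $\phi\otimes k$ is right multiplication by some $d\in D^\times$; restricting back gives $M=\phi(C_j)=C_j d$. Thus $M$ is the $j$-th column of $R$ up to the scalar $d$, as claimed. The only real obstacle is this final identification: abstract projectivity only yields an isomorphism to some column, and one must use the simplicity of $V$ together with $\End_{\Mat_n(D)}(V)=D$ to convert it into an equality of submodules up to scalar. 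Verifying $\End_R(M)\otimes_\calO k=\End_{\Mat_n(D)}(V)$ for lattices and the semiperfectness of $R$ are standard, and I would cite them rather than reprove them.
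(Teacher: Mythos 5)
Your proof is correct, but it takes a genuinely different route from the paper's. You argue structurally: decompose $R=\bigoplus_j C_j$ into its column ideals, observe that each $C_j$ and $M$ are indecomposable because their $R$-endomorphism rings embed in the division algebra $\End_{\Mat_n(D)}(V)=D$ and hence contain no nontrivial idempotents, invoke semiperfectness of $R$ (module-finite over the complete ring $\calO$) together with Krull--Schmidt to conclude $M\cong C_j$ for some $j$, and finally use Schur's lemma to realize the abstract isomorphism as right multiplication by some $d\in D^\times$, giving $M=C_jd$. The paper instead gives a self-contained matrix computation: it forms the evaluation surjection $\varphi:R\to M$, $X\mapsto X\mathbf{v}$ with $\mathbf{v}=[\pi^{l_1},\dots,\pi^{l_n}]^t$, splits it by some $\psi$ using projectivity, solves explicitly the linear system defining $\ker\varphi$, and then compares valuations of entries in the identity $\psi(E_{1j}\mathbf{v})=E_{1j}\psi(\mathbf{v})$, distinguishing two cases according to whether the $(1,1)$-entry of $\psi(\mathbf{v})$ is a unit; this pins down $M$ as an explicit column of $R$, entry by entry. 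Your route is shorter modulo the standard facts you cite (semiperfectness and Krull--Schmidt for lattices over complete orders, extension of endomorphisms of a full lattice to $V$), and it yields the cleaner structural statement that the $C_j$ are precisely the indecomposable projective $R$-lattices in $V$; the paper's route uses no machinery beyond valuations and produces the explicit equalities $l_k=m_{kj}$. The step you flagged as the main point --- upgrading the abstract isomorphism to an honest scalar via $\End_{\Mat_n(D)}(V)=D$ --- is indeed where the content lies, and you handle it correctly.
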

\begin{proof}
The if part is obvious. Suppose $M$ is a projective $R$-module.
Again we may assume that $l_1=0$ and $l_i\ge 0$ for all $i$.
It follows from (\ref{eq:22}) that $l_i \leq m_{i1}$ for
$i=1,\ldots,n.$
Consider the map $\varphi: R \rightarrow M$ defined
by $\varphi(X)=X\cdot \mathbf{v},$ where $X\in R$ and
$\mathbf{v} =[\pi^{l_1},\pi^{l_2},\ldots,\pi^{l_n}]^t$.
Note that $\varphi(I_n) = \mathbf{v}$ and $\varphi$ is a surjective
$R$-homomorphism, where $I_n$ is the identity element of $R.$ Since
$M$ is $R$-projective, there exists an $R$-linear homomorphism $\psi : M
\rightarrow R$ such that $\varphi \circ \psi =id_{M}$. Clearly
$\psi(\mathbf{v}) \in I_n + \ker(\varphi).$

Let
\[ X=\left(
\begin{array}{cccc}
a_{11} & a_{12} & \ldots & a_{1n}\\
a_{21}\pi^{m_{21}} & a_{22} & \ldots & a_{2n}\pi^{m_{2n}}\\
\vdots & \vdots & \ddots & \vdots\\
a_{n1}\pi^{m_{n1}} & a_{n2}\pi^{m_{n2}} & \ldots & a_{nn}
\end{array} \right) \]
be an element in $R$, where $a_{ij}$ are elements in $\calO_D$. 
Then $X \in \ker(\varphi)$ if and only if the following system of
equations is satisfied
\begin{align}
 \label{eq:29}
  \left\{
\begin{array}{cccc}
&a_{11} + a_{12}\pi^{l_2} +\ldots+ a_{1n}\pi^{l_n} & = 0,\\
&a_{21}\pi^{m_{21}} + a_{22}\pi^{l_2} +\ldots +
a_{2n}\pi^{m_{2n}+l_n}
 & = 0,\\
&\vdots &\vdots \\
&a_{n1}\pi^{m_{n1}} + a_{n2}\pi^{m_{n2}+l_2} +\ldots + a_{nn}\pi^{l_n}
& = 0.\\
\end{array} \right.
\end{align}
Using (\ref{eq:22}) we solve the above system of
equations and get
\begin{align}
 \label{eq:210}
  \left\{
\begin{array}{llll}
a_{11} &= -a_{12}\pi^{l_2} - a_{13}\pi^{l_3} - \cdots -
a_{1n}\pi^{l_n},\\
a_{22} &= -a_{21}\pi^{m_{21}-l_2} + a_{23}\pi^{m_{23}+l_3 -l_2}-\cdots
- a_{2n}\pi^{m_{2n}+l_n -l_2},\\
&\vdots\\
 a_{nn} &= -a_{n1}\pi^{m_{n1}-l_n} -
a_{n2}\pi^{m_{n2}+l_2-l_n}-\cdots -
a_{n,n-1}\pi^{m_{n,n-1}+l_{n-1}-l_n}.
\end{array} \right.
\end{align}
Plugging (\ref{eq:210}) into $X$, we have
\begin{align}
X =\left[
\begin{array}{cccc}
-\sum\limits_{i\neq 1}a_{1i}\pi^{l_i} & a_{12} & \ldots &a_{1n} \\
a{_{21}\pi^{m_{21}}} &   -\sum\limits_{i\neq 2}a_{2i}\pi^{m_{2i} + l_i
  - l_2} & \ldots& a_{2n}\pi^{m_{2n}} \\
\vdots & \vdots & \ddots & \vdots\\
a_{n1}\pi^{m_{n1}} & a_{n2}\pi^{m_{n2}} & \ldots &
-\sum\limits_{i\neq n}a_{ni}\pi^{m_{ni} + l_i - l_n}
\end{array} \right].
\end{align}
Now, fix an element $B:=\psi(\mathbf{v})\in I_n+\ker(\varphi).$
Write
 \begin{align}
 \label{eq:211}
B =\left[
\begin{array}{cccc}
1-\sum\limits_{i\neq 1}a_{1i}\pi^{l_i} & a_{12} & \cdots &a_{1n} \\
a{_{21}\pi^{m_{21}}} &   1-\sum\limits_{i\neq 2}a_{2i}\pi^{m_{2i} +
  l_i - l_2} & \cdots& a_{2n}\pi^{m_{2n}} \\
\vdots & \vdots & \ddots & \vdots\\
a_{n1}\pi^{m_{n1}} & a_{n2}\pi^{m_{n2}} & \cdots &
1-\sum\limits_{i\neq n}a_{ni}\pi^{m_{ni} + l_i - l_n}
\end{array} \right]
\end{align}
for some elements $a_{ij}$ in $\calO_D$.  
Let $E_{ij}$ denote the  matrix in $\Mat_n(D)$ with entry
one at $(i,j)$ and zero elsewhere.
It is easy to see $\pi^{l_j} E_{11}\mathbf{v}=E_{1j}\mathbf{v}$.
Since $\psi$ is $R$-linear, we have
\begin{align}
\label{eq:212}  \pi^{l_j} E_{11}\psi(\mathbf{v})
=\psi(E_{1j}\mathbf{v}) = E_{1j}\psi(\mathbf{v}), \ \
j = 2,\ldots,n.
\end{align}
Namely, we have
\begin{align}
\label{eq:312}
 \pi^{l_j} \left[
\begin{array}{cccc}
1-\sum\limits_{i\neq 1}a_{1i}\pi^{l_i} &a_{12}& \ldots & a_{1n}\\
0&  0& \ldots & 0\\
\vdots & \vdots &  \ddots & \vdots\\
0 & 0& \ldots& 0
\end{array} \right] =\left[
\begin{array}{cccc}
a_{j1}\pi^{m_{j1}} &a_{j2}\pi^{m_{j2}}& \ldots & a_{jn}\pi^{m_{jn}}\\
0&  0& \ldots & 0\\
\vdots & \vdots &  \ddots & \vdots\\
0 & 0& \ldots& 0
\end{array} \right]
\end{align}
for $j = 2,\ldots,n$.
We shall 
conclude the theorem by comparing the valuations of entries in
both sides of (\ref{eq:312}).
Recall that $l_j \leq m_{j1}$ for $j = 1,2,\ldots, n$ from (\ref{eq:22}).
\begin{itemize}
\item[(a)] If $1-\sum\limits_{i\neq 1}a_{1i}\pi^{l_i}$ is a unit, then
\[ l_j = v(\pi^{l_j}(1-\sum\limits_{i\neq 1}a_{1i}\pi^{l_i}))=
v(a_{j1}\pi^{m_{j1}})=v(a_{j1}) + m_{j1} \geq m_{j1} \]
for $j=2,\ldots,n.$ As $l_1=m_{11}=0$, we get $l_j = m_{j1}$ 
for $j=1,\ldots,n.$ Therefore, the module $M$ is equal to  
the first column of $R.$\\
\item[(b)]If $1-\sum\limits_{i\neq 1}a_{1i}\pi^{l_i}$ is not a unit,
then there exists $k \in \{2,\dots,n\}$
such that $v(a_{1k})=l_k=0.$ We have $l_1=m_{1k}=0$ and $l_k=m_{kk}=0$.
For $j \neq 1$
and $j \neq k,$ we have
\[ l_j = v(\pi^{l_j}a_{1k})=v(a_{jk}\pi^{m_{jk}}) \geq
m_{jk}.\]
On the other hand, $m_{jk} = m_{jk} + l_k \geq l_j.$ Therefore,
$l_j = m_{jk}$ for $j=1,\ldots,n.$ In this case, the module $M$
is equal to the $k$-th column of $R.$
\end{itemize}
This completes the proof of the theorem. \qed
\end{proof}

\subsection{Gorenstein monomial orders}
\label{sec:33}
Recall that an $\mathcal{O}$-order $R$ in a semi-simple and separable
$k$-algebra is
$\emph{Gorenstein}$ if every $R$-exact sequence of right $R$-lattices
\begin{equation}
  \label{eq:3.15}
  0\rightarrow R \rightarrow M \rightarrow N \rightarrow 0
\end{equation}
splits.
Taking the $\calO$-linear dual of (\ref{eq:3.15}), we have the
following exact sequence of left $R$-lattices
\begin{equation}
  \label{eq:3.16}
  0\rightarrow \Hom_\calO(N,\calO)  \rightarrow \Hom_{\calO}(M,\calO)
  \rightarrow \Hom_\calO (R,\calO) \rightarrow 0
\end{equation}
as $N$ is $\calO$-projective. The exact sequence (\ref{eq:3.15})
splits if and only if the exact sequence (\ref{eq:3.16}) splits as
$R$-modules. Therefore $R$ is Gorenstein if and only if the
$\calO$-linear dual $R^\vee:=\Hom_\calO(R,\calO)$ is a left projective
$R$-module; cf. \cite[Prop. 6.1, p. 1363]{DKR:1967}.

Recall that an $\mathcal{O}$-order $R$ in a semi-simple and separable
$k$-algebra is
$\emph{Bass}$ if any $\calO$-order containing it is Gorenstein.

\begin{lemma}\label{32} Let $R=\Mat_n(\calO_D,\ul m)$ be a monomial
  order of level $\ul m=(m_{ij})$ in $\Mat_n(D)$. 
  As left $R$-modules, $R^\vee$ is isomorphic to $\Mat_n(\calO_D,\ul
  m')\subset \Mat_n(D)$ with $m'_{ij}=-m_{ji}$ for all $1\le i,j\le
  n$.
\end{lemma}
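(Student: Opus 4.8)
The plan is to realize the $\calO$-dual $R^\vee$ as an explicit lattice inside $A=\Mat_n(D)$ by means of the reduced trace pairing, and then to read off its level from the entrywise behavior of that pairing. Write $\Trd\colon A\to k$ for the reduced trace and consider the nondegenerate symmetric $k$-bilinear form $T(X,Y)=\Trd(XY)$. Since $R$ is a full $\calO$-lattice in the separable $k$-algebra $A$, the assignment $Y\mapsto (X\mapsto \Trd(XY))$ gives an $\calO$-linear isomorphism from the trace-dual lattice
\[ R^{*}:=\{\,Y\in A\, ;\, \Trd(XY)\in\calO\ \text{for all}\ X\in R\,\} \]
onto $R^\vee=\Hom_\calO(R,\calO)$. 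First I would check that this respects the left $R$-module structures: under the action $(r\cdot f)(X)=f(Xr)$ the element $Y$ corresponding to $f$ is sent to $rY$, because $\Trd(XrY)=\Trd(X(rY))$. Thus it suffices to compute $R^{*}$ together with its left-multiplication action.

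Next I would compute the pairing in coordinates. Writing $X=(x_{ij})$ and $Y=(y_{ij})$, the diagonal entries of $XY$ give
\[ T(X,Y)=\sum_{i,j}\Trd_{D/k}(x_{ij}y_{ji}). \]
Because the entries $x_{ij}\in\grP^{m_{ij}}$ may be varied independently (setting all others to zero), the condition $T(X,Y)\in\calO$ for all $X\in R$ decouples into the entrywise conditions $\Trd_{D/k}(x\,y_{ji})\in\calO$ for all $x\in\grP^{m_{ij}}$. This reduces everything to the one-variable computation of the trace-dual of the ideal $\grP^{a}$ in $D$. For the latter I would invoke the different: the trace-dual of $\calO_D$ is a two-sided fractional $\calO_D$-ideal, hence equals $\grP^{-\delta}$ for the different exponent $\delta\ge 0$; since $\grP^{a}=\pi^{a}\calO_D$ with $\pi$ normalizing $\calO_D$ and $\Trd_{D/k}(ab)=\Trd_{D/k}(ba)$, the trace-dual of $\grP^{a}$ is $\grP^{-a-\delta}$. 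Feeding this back in yields $y_{ji}\in\grP^{-m_{ij}-\delta}$ for all $i,j$, that is, $R^{*}=\Mat_n(\calO_D,\ul m'')$ with $m''_{ij}=-m_{ji}-\delta$.

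The remaining point, which is the only place one must be slightly careful, is that the different exponent appears as an unwanted uniform shift $-\delta$, whereas the statement asks for $m'_{ij}=-m_{ji}$. I would remove it by right multiplication by the scalar matrix $\pi^{\delta}I_n$: since $\grP^{a}\pi^{\delta}=\grP^{a+\delta}$, this carries $\Mat_n(\calO_D,\ul m'')$ isomorphically onto $\Mat_n(\calO_D,\ul m')$ with $m'_{ij}=-m_{ji}$. Being right multiplication by a fixed invertible element of $A$, it commutes with every left multiplication by associativity, hence is an isomorphism of left $R$-modules. Composing the three identifications gives $R^\vee\cong\Mat_n(\calO_D,\ul m')$ as left $R$-modules, as claimed.

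The main obstacle is precisely this bookkeeping of the different. One must know that its contribution to the level is independent of the entry position, so that the spurious shift is a single uniform constant and can be absorbed by one right scalar multiplication without disturbing the left $R$-action. Once the trace-dual is seen to decouple entrywise and the different is recognized as position-independent, the rest is formal.
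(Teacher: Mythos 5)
Your proof is correct and follows essentially the same route as the paper: both identify $R^\vee$ with the trace-dual lattice $R^*$ via the reduced trace pairing (with the same left $R$-module identification $r\cdot f_Y=f_{rY}$), compute $R^*$ entrywise to get $(\grP^{-m_{ji}}\calD^{-1})_{i,j}$, and absorb the inverse different. Your explicit removal of the uniform shift $\delta$ by right multiplication by $\pi^\delta I_n$ is a detail the paper leaves implicit, but it is the same argument.
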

\begin{proof}
  View $A:=\Mat_n(D)$ as a right $A$-module by usual
  multiplication.
 Consider the $k$-bilinear
 pairing $A\times A \to k$ by
$(x,y)\mapsto \tr_{A/k}(xy)$, where $\tr_{A/k}$ denotes the reduced
trace from $A$ to $k$. For any element $x\in A$, we denote by
$f_x\in A^\vee:=\Hom_k(A_A,k)$ the map $f_x(y):=\tr_{A/k}(yx)$. The
map is left $A$-linear as
\[ a\cdot f_x(y)=f_x(ya)=\tr_{A/k}(yax)=f_{ax}(y)\quad \forall\,
a,x,y\in A. \]
Fix this isomorphism $A\simeq A^\vee$, we have
\[ R^\vee\simeq R^*:=\{\, x\in A\, ;\, \tr_{A/k}(ax)\in \calO, \  \forall\, a\in
R\,\}. \]
Let $\calD^{-1}:=\{x\in D; \tr_{D/k}(\calO_D x)\subset \calO\}$ be the
``inverse of different'' of $D/k$. As $R=\Mat_n(\calO_{D},\ul m)$ with
level $\ul m =(m_{ij})$. Then one
computes (using the dual basis of $\pi^{m_{ij}} E_{ij}$) that
\[ R^*=( \grP^{m_{ij}'}\calD^{-1})_{i,j}, \]
where $m_{ij}'=-m_{ji}$ for all $i,j$. As a left $R$-module, $R^\vee$
is isomorphic to $\Mat_n(\calO_D,\ul m')$. This proves the lemma. \qed
\end{proof}



By Theorem~\ref{31} and Lemma~\ref{32}, we have proven
the following result. This will be used in the next section
to characterize Gorenstein orders among upper triangular monomial
orders (Theorem~\ref{42}).

\begin{thm} \label{33}
 Let $R=\Mat_n(\calO_D, \ul m)$ be a standard monomial order
  of level $\ul
  m=(m_{ij})$ in the central simple algebra $\Mat_n(D)$. Then $R$ is
  Gorenstein if and only if for each $1\le i\le n$ there exists an
  integer $c(i)$ such that the integral column vector
  $[-m_{i1}+c(i),\dots ,-m_{in}+c(i)]^t$ is equal to a column of $\ul m$.
\end{thm}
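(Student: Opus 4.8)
The plan is to combine the duality criterion for Gorenstein orders with the classification of projective column modules in Theorem~\ref{31}. Recall from the discussion preceding Lemma~\ref{32} that $R$ is Gorenstein precisely when the dual $R^\vee=\Hom_\calO(R,\calO)$ is a projective left $R$-module. By Lemma~\ref{32}, as a left $R$-module $R^\vee$ is isomorphic to the monomial module $\Mat_n(\calO_D,\ul m')\subset \Mat_n(D)$ with $m'_{ij}=-m_{ji}$. So the whole problem reduces to deciding when $\Mat_n(\calO_D,\ul m')$ is left $R$-projective.

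First I would exploit the column decomposition. Since left multiplication by $R$ acts separately on each column of a matrix, the ambient module $\Mat_n(D)=\End_D(V)$ splits as a left $R$-module into the direct sum of its $n$ columns, each isomorphic to $V=D^n$. This decomposition is respected by the $\calO_D$-lattice $\Mat_n(\calO_D,\ul m')$, so $R^\vee\cong\bigoplus_{i=1}^n C_i$, where $C_i$ is the $i$-th column, a left $R$-submodule of $V$ of type $[-m_{i1},\dots,-m_{in}]^t$ (reading off the $i$-th column of $\ul m'$). A direct sum is projective if and only if each summand is projective, so $R^\vee$ is left $R$-projective exactly when every column module $C_i$ is projective.

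Next I would apply Theorem~\ref{31} to each $C_i$. That theorem says a column module of type $[l_1,\dots,l_n]^t$ is $R$-projective if and only if it agrees, up to a scalar in $D$, with some column of $R$; since the columns of $R$ have types $[m_{1j},\dots,m_{nj}]^t$ and multiplication by a scalar $\pi^{c}$ shifts a type by the constant $c$, projectivity of $C_i$ is equivalent to the existence of an integer $c(i)$ and an index $j$ with $[-m_{i1}+c(i),\dots,-m_{in}+c(i)]^t=[m_{1j},\dots,m_{nj}]^t$, i.e.\ the displayed vector is a column of $\ul m$. Quantifying over all $i$ yields exactly the stated criterion.

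The one point requiring care is that Theorem~\ref{31} was proved under the normalization $m_{1j}=0$ for all $j$, whereas Theorem~\ref{33} is stated for an arbitrary level. I would handle this by invoking Lemma~\ref{22} to conjugate $R$ by a diagonal element of $\widetilde{W}$ into normalized form; since conjugation by $\mathrm{diag}(\pi^{a_1},\dots,\pi^{a_n})$ replaces $m_{ij}$ by $m_{ij}+a_j-a_i$ (as in the proof of Lemma~\ref{22}), a short computation shows that both the left $R$-module isomorphism class of $R^\vee$ and the combinatorial condition (after absorbing the $a_i$ into a new choice of $c(i)$) are unchanged, so no generality is lost. This bookkeeping is the only genuine obstacle; the remainder is the direct assembly of Lemma~\ref{32} and Theorem~\ref{31} already advertised in the text.
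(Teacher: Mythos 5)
Your proposal is correct and follows essentially the same route as the paper, which derives Theorem~\ref{33} directly from Lemma~\ref{32} (identifying $R^\vee$ with $\Mat_n(\calO_D,\ul m')$, $m'_{ij}=-m_{ji}$) together with Theorem~\ref{31} applied columnwise, using the criterion that $R$ is Gorenstein iff $R^\vee$ is left $R$-projective. Your explicit treatment of the column decomposition and of the invariance of the condition under the normalization of Lemma~\ref{22} simply fills in details the paper leaves implicit.
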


\subsection{}
\label{sec:34}
Using Theorem~\ref{33}, we give a list of all Gorenstein monomial
orders $R$ in $\Mat_n(D)$ with $n=4$ (when $n=3$ any Gorenstein
monomial order is isomorphic to an upper triangular monomial order).
Any such order $R$ is isomorphic to $\Mat_4(\calO_D, \ul m)$, where
$\ul m$ is one of the following:
\[
\left[
\begin{array}{cccc}
0 & 0 & 0 & 0 \\
0 & 0 & 0 & 0 \\
0 & 0 & 0 & 0 \\
0 & 0 & 0 & 0
\end{array} \right],
\left[
\begin{array}{cccc}
0 & 0 & 0 & 0 \\
0 & 0 & 0 & 0 \\
0 & 0 & 0 & 0 \\
a & a & a & 0
\end{array} \right],
\left[
\begin{array}{cccc}
0 & 0 & 0 & 0 \\
0 & 0 & 0 & 0 \\
a & a & 0 & 0 \\
a & a & 0 & 0
\end{array} \right],
\left[
\begin{array}{cccc}
0 & 0 & 0 & 0 \\
0 & 0 & 0 & 0 \\
a & a & 0 & 0 \\
a & a & a & 0
\end{array} \right], \]
\[
\left[
\begin{array}{cccc}
0 & 0 & 0 & 0 \\
a & 0 & 0 & 0 \\
a & a & 0 & 0 \\
a & a & a & 0
\end{array} \right],
\left[
\begin{array}{cccc}
0 & 0 & 0 & 0 \\
a & 0 & a & 0 \\
b & b & 0 & 0 \\
a+b & b & a & 0
\end{array} \right],
\left[
\begin{array}{cccc}
0 & 0 & 0 & 0 \\
a & 0 & a & 0 \\
a+b & b & 0 & 0 \\
a+b & a+b & a & 0
\end{array} \right],
\]
for some positive integers $a, b$.

\section{Upper triangular Gorenstein orders}\label{sec:04}

We call an $\calO$-order $R$ in a central simple algebra $A\simeq
\Mat_n(D)$ over $k$ an {\it Eichler order} if it is isomorphic to an
upper triangular monomial order $\Mat_n(\calO_D,\ul m)$ of level $\ul
 m=(m_{ij})$ such that for any $n\ge i>j\ge 1$, either $m_{ij}=0$ or
 $m_{ij}=m_{n1}$. In this case, there are a tuple $(k_1,\dots, k_t)\in
 \Z_{>0}^t$ with $k_1+\dots+k_t=n$ and a positive integer $a$ if
 $t>1$ so that $\ul m$ can be written as a $t$ by $t$ blocks
 $(M_{kl})_{1\le k,l\le t}$ of matrices with
\[ M_{kl}=
\begin{cases}
  {\bf 0}^{n_k}_{n_l} & \text{if $k\le l$;}\\
  a{\bf 1}^{n_k}_{n_l} & \text{if $k>l$.}
\end{cases} \]
Recall that ${\bf 1}^{r}_{s}$ (resp. ${\bf 0}^{r}_{s}$) denotes the
$r\times s$ matrix with every entry equal to $1$ (resp. $0$).
We shall call $t$ the {\it period} of the Eichler order $R$ and
  $(k_1,\dots, k_t)$ is the {\it invariant} of $R$, which is uniquely
  determined by $R$ up to cyclic permutation. For example if $t=3$,
  $(k_1,k_2,k_3)=(1,2,1)$ and $a=2$, then
\[ \ul m=\left[
\begin{array}{cccc}
0 & 0 & 0 & 0 \\
2 & 0 & 0 & 0 \\
2 & 0 & 0 & 0 \\
2 & 2 & 2 & 0
\end{array} \right].
\]

It is not hard to show that any Eichler order is a Gorenstein order using
Lemma~\ref{32} and Theorem~\ref{33}.
The main result (Theorem~\ref{42}) says that the converse is also
true. That is, any Gorenstein upper triangular monomial order is an
Eichler order.






\begin{thm}\label{42}
If $R=\Mat_n(\calO_D,\ul m)$ is an upper triangular monomial order,
then $R$ is Gorenstein if and only if $R$ is an Eichler order.
\end{thm}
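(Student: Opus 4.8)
The plan is to prove both implications within the upper triangular class using the Gorenstein criterion of Theorem~\ref{33}. Throughout write $\ul m=(m_{ij})$, which is strictly lower triangular ($m_{ij}=0$ for $i\le j$); such an order is automatically of positive type, since applying \eqref{eq:21} to the triple $(i,j,i)$ and using $m_{ji}=0$ for $j<i$ gives $0=m_{ii}\le m_{ij}+m_{ji}=m_{ij}$. For the easy implication (Eichler $\Rightarrow$ Gorenstein) I would simply verify the criterion of Theorem~\ref{33}: for a block level matrix with $M_{kl}=a\,\mathbf 1$ when $k>l$ and $M_{kl}=\mathbf 0$ when $k\le l$, the column $c(i)\mathbf 1-(\text{row }i)^t$ equals the column of block $k-1$ when $c(i)=a$ and $i$ lies in block $k\ge 2$, and equals a zero column when $c(i)=0$ and $i$ lies in block $1$. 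This is the direction already announced as routine in the text.

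The substance is the converse. First I would extract the combinatorial skeleton from the order condition alone. Declare $i\equiv j$ when $m_{ij}=m_{ji}=0$; since $\{m_{ij}=0\}$ is a preorder (reflexivity from $m_{ii}=0$, transitivity from \eqref{eq:21}), this is an equivalence relation, and the implication $i\le j\Rightarrow m_{ij}=0$ together with transitivity shows that each class is an interval of consecutive indices. Thus $\{1,\dots,n\}$ splits into consecutive blocks $B_1,\dots,B_t$. Using \eqref{eq:21} with $m_{ii'}=m_{i'i}=0$ for $i,i'$ in one block shows $m_{ij}$ depends only on the blocks of $i$ and $j$, giving a reduced $t\times t$ matrix $V=(v_{kl})$ with $v_{kl}=0$ iff $k\le l$ and $v_{kl}>0$ iff $k>l$. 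I would also record, from \eqref{eq:21} and the vanishing of row $1$ and column $n$, the bounds $m_{ij}\le m_{i1}\le m_{n1}=:a$, so that $a$ is the maximal entry and $v_{k1}=:a_k$ satisfies $a_1=0$ and $a_t=a$.

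Next I would transport the Gorenstein hypothesis to $V$. Since every row and column of $\ul m$ is block-constant, Theorem~\ref{33} says precisely that for each block $k$ there are $c_k$ and $\lambda(k)$ with $v_{l,\lambda(k)}=c_k-v_{k,l}$ for all $l$; evaluating at $l=1$ gives $c_k=v_{k1}=a_k$, and at $l\le\lambda(k)$ gives $v_{k,l}=a_k$. Applying this for $k=2$, whose row is $(a_2,0,\dots,0)$, the matching column must have a nonzero entry in its second slot and hence be column $1$; this forces $v_{l,1}=a_2$ for all $l\ge 2$, i.e.\ $a_l=a_2$ for every $l\ge 2$, and in particular $a_2=a_t=a$. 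Thus column $1$ below block $1$ is constantly $a$, and the criterion becomes $v_{k,l}=a-v_{l,\lambda(k)}$ for all $l$ and all $k\ge 2$.

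The final and decisive step is to propagate the value $a$ to every below-diagonal entry, and I expect this to be the main obstacle. Writing $p:=\lambda$, for a positive entry $v_{k,l}$ with $k>l\ge 2$ I apply the relation to row $k$ and then to row $l$ to obtain the shift identity $v_{k,l}=a-v_{l,p(k)}=v_{p(k),p(l)}$. Since $v_{k,l}>0$ the right-hand side is positive, which forces $p(k)>p(l)$, so the new pair $(p(k),p(l))$ is again strictly below the diagonal and carries the same value; iterating strictly decreases the column index $p^{(r)}(l)$ until it reaches $1$, at which stage the entry equals $v_{k',1}=a$ for the terminal row index $k'\ge 2$, by the previous paragraph. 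Together with column $1$ this shows every $v_{kl}$ with $k>l$ equals $a$, so $R$ is an Eichler order. The block reduction and the extremal bound $a=m_{n1}$ are routine consequences of \eqref{eq:21}; the genuinely clever move is isolating the descent $(k,l)\mapsto(p(k),p(l))$ that simultaneously preserves the entry and lowers the indices, while the minor points to check are the boundary case $t=1$ (the maximal order, Eichler of period one) and the fact that $p(k)$ is well defined for every $k\ge 2$.
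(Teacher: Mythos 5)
Your proof is correct, and it takes a genuinely different route from the paper's. The paper also funnels everything through Theorem~\ref{33} (phrased there as: every column of the normalized dual level matrix $\ul m'$ of \eqref{eq:41} is a column of $\ul m$), but then argues by induction on $n$ with a case split on $m_{21}$: if $m_{21}=0$, indices $1$ and $2$ are merged and the problem drops to size $n-1$; if $m_{21}\neq 0$, column matching forces $m_{21}=m_{31}=\dots=m_{n1}$, and a secondary induction on the column index $k$ propagates $m_{jk}\in\{0,m_{n1}\}$, using that each entry of the $k$-th row of $\ul m'$ must occur in the $k$-th row of $\ul m$. You avoid both inductions: your quotient by the equivalence relation $m_{ij}=m_{ji}=0$ absorbs the paper's case (a) (repeated indices disappear into blocks), and your descent $(k,l)\mapsto(p(k),p(l))$ along the column-assignment map of Theorem~\ref{33}, which preserves the entry and strictly lowers the column index, replaces the paper's case (b). Your approach buys two things: the block reduction actually \emph{proves} that the order condition \eqref{eq:21} upgrades ``all subdiagonal entries lie in $\{0,a\}$'' to the block form in the definition of an Eichler order --- an equivalence the paper asserts without proof --- and the shift identity $v_{k,l}=v_{p(k),p(l)}$ is a clean structural fact. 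The paper's approach buys mechanical verifiability: the dual matrix \eqref{eq:41} is written out once and every step is a direct comparison of explicit entries. One point you flag as minor but should write out, since your termination argument rests on it: the strict decrease $p(l)<l$ for $l\ge 2$ follows from evaluating row $l$'s relation at the index $l$ itself, which gives $v_{l,p(l)}=a-v_{l,l}=a>0$ and hence $p(l)<l$; combined with $p(k)>p(l)$ from positivity, this is exactly what keeps the iteration inside the subdiagonal region until the column index reaches $1$.
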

\begin{proof}
The if part is easier; this follows from a direct computation of
$R^\vee$ and Theorem~\ref{33} for $R$-projectivity  of $R^\vee$. 
We leave the detailed proof to the reader and prove
the other direction.
Suppose $R$ is a Gorenstein order and write
\begin{align*} \underline{m} = \left[
\begin{array}{ccccccc}
0& 0&0  & 0& \ldots & 0\\
m_{21} & 0 & 0 &  0 & \ldots &0 \\
m_{31} & m_{32} & 0 & 0& \ldots &0 \\
m_{41} & m_{42} & m_{43} & 0& \ldots &0 \\
\vdots & \vdots & \vdots & \vdots & \ddots & \vdots\\
m_{n1} & m_{n2}& m_{n3} & m_{n4} &\ldots & 0
\end{array} \right].
\end{align*}
We must prove that $R$ is an Eichler order, or
equivalently $m_{jk}\in \{m_{n1},0\}$ for all $1\le k<j \le n$.

By Lemma~\ref{32}, the dual $R^{\vee}$ is
is isomorphic to the module $R' =
\mathrm{Mat}_n(\mathcal{O}_D,\underline{m}')$ with level
\begin{equation}
  \label{eq:41}
 \underline{m}' = \left[
\begin{array}{cccccc}
0& 0 & 0 & 0 & \ldots & 0\\
0 &m_{21} & m_{31}-m_{32} & m_{41} - m_{42} & \ldots & m_{n1} - m_{n2}\\
0 &m_{21} & m_{31} & m_{41}- m_{43} &\ldots & m_{n1} - m_{n3}\\
0 &m_{21} & m_{31} & m_{41} &\ldots & m_{n1} - m_{n4}\\
\vdots & \vdots & \vdots & \vdots & \ddots & \vdots\\
0 & m_{21}& m_{31} & m_{41} & \ldots & m_{n1}
\end{array} \right].
\end{equation}
Here we normalize $\ul m'$ so that its first row is zero.
Since $R$ is Gorenstein, by Theorem~\ref{33}, 
any column of $\underline{m}'$ is a column of $\underline{m}$.

We prove the statement that $R$ is Eichler by induction on $n$.
The cases $n=1,2$ are obvious.
Suppose $n\ge 3$ and that the statement is true for $n'< n$. 

(a) Suppose $m_{21} = 0$. It follows from $m_{i1} \geq m_{i2} = m_{i2} +
m_{21} \geq m_{i1}$ that $m_{i2} = m_{i1}$ for all $i =
1,\ldots,n$. Namely,
\[
\underline{m} =\left[
\begin{array}{lc}
0 & {\bf 0}^{1}_{n-1} \\
\mathbf{Y} &\underline{m_1}
\end{array} \right] \ \   \mathrm{and}\ \
 \underline{m}'= \left[
\begin{array}{ll}
0 & {\bf 0}^{1}_{n-1}\\
{\bf 0}^{n-1}_{1} &\underline{m_1'}
\end{array} \right],
\]
 where
\[
\underline{m_1} = \left[
 \begin{array}{ccccc}
 0 &0 & 0 &\ldots &0\\
 m_{31}&0 &0 & \ldots &0\\
 m_{41}& m_{43}& 0 & \ldots &0\\
  \vdots & \vdots & \vdots & \ddots & \vdots\\
  m_{n1}& m_{n3} &m_{n4}& \ldots &0
 \end{array} \right], \ \
\mathbf{Y} = \left[
 \begin{array}{ccccc}
0\\
m_{31} \\
m_{41}\\
\vdots \\
m_{n1}
 \end{array} \right],
\]
and
\[
 \underline{m_1'} = \left[
 \begin{array}{ccccc}
 0  & 0 &0& \ldots & 0\\
 0  & m_{31} &m_{41} - m_{43} & \ldots & m_{n1} - m_{n3}\\
 0  & m_{31} & m_{41} &\ldots & m_{n1} - m_{n4}\\
 \vdots & \vdots & \vdots & \ddots & \vdots\\
 0 & m_{31} &m_{41} & \ldots & m_{n1}
 \end{array} \right].
\]
Put $R_1:= \mathrm{Mat}_{n-1}(\mathcal{O}_D,\underline{m_1}).$ Then
the dual lattice $R_1^{\vee}$ is isomorphic to the module $R_1' :=
\mathrm{Mat}_{n-1}(\mathcal{O}_D,\underline{m_1'}).$ Clearly $R$ is
an Eichler order if and only if so is $R_1$. By the induction
hypothesis, $R_1$ is Eichler. It follows that $R$ is
an Eichler order.


(b) Suppose $m_{21} \neq 0.$ Then the second column of the matrix
$\underline{m}'$ must be the first column of $\underline{m}$ and
hence $m_{21} = m_{31} = \ldots = m_{n1}.$ So the matrix $\ul m'$
becomes
\[
\underline{m}' = \left[
\begin{array}{cccccc}
0& 0 & 0 & 0 & \ldots & 0\\
0 &m_{n1} & m_{n1}-m_{32} & m_{n1} - m_{42} & \ldots & m_{n1} - m_{n2}\\
0 &m_{n1} & m_{n1} & m_{n1}- m_{43} &\ldots & m_{n1} - m_{n3}\\
0 &m_{n1} & m_{n1} & m_{n1} &\ldots & m_{n1} - m_{n4}\\
\vdots & \vdots & \vdots & \vdots & \ddots & \vdots\\
0 & m_{n1}& m_{n1} & m_{n1} & \ldots & m_{n1}
\end{array} \right].
\]
As each entry of the $k$-th row of $\underline{m}'$, for $2\leq k \leq
n-1$, occurs in the $k$-th row of $\ul m$, we
have
\begin{align}\label{eq:415}
\displaystyle m_{n1} - m_{jk} \in \{m_{k1}, m_{k2},\ldots, \
m_{k,k-1},0 \}\ \  \mathrm{for \ all}\ 2\le  k < j \leq n.
\end{align}

We now prove
\begin{align}\label{eq:42}
 \{m_{k1}, m_{k2},\ldots, \
m_{k,k-1},0 \} = \{m_{n1},0\}.
\end{align}
 for $2 \leq k \leq
n-1$, or equivalently $m_{jk} \in \{ 0,m_{n1} \}$ for all $2\le k < j \leq
n$ (as  $m_{n1}-m_{jk} \in \{ m_{n1}, 0 \}$ by (\ref{eq:415})). 
We prove this by induction on $k$. When $k = 2$, (\ref{eq:42}) is true
as $m_{21}=m_{n1}$, and by (\ref{eq:415}) one has  $m_{j2} \in \{0,
m_{n1}, 0\}$ for $2 < j \leq n.$   
Suppose the statement (\ref{eq:42}) is true for $k \leq l$, i.e.
$m_{jk} \in \{ 0, m_{n1}\}$ for all $k < j \leq n$ and $k =
2,\ldots,l$. Then for $k = l + 1,$ we have $$m_{n1} - m_{j,l+1} \in
\{m_{l+1,1},m_{l+1,2},\ldots,m_{l+1,l}, 0\} = \{m_{n1},0\},$$ for all
$l+1 < j$ because of $m_{l+1,1} = m_{n1}$ and the induction
hypothesis. This proves (\ref{eq:42}) and that $R$ is Eichler in case
(b). 

This completes the proof of the theorem. \qed
\end{proof}



\section{Monomial orders and Bass orders}
\label{sec:05}

\subsection{}
\label{sec:51}
In this section we prove Theorem~\ref{1.3}.


 \begin{prop}\label{51}
  Assume that  $R = \Mat_n(\calO_{D},\ul m)$ is upper triangular.
  Then $R$ is
  Bass if and only if either $R$ is a hereditary order, or $R$ is an
  Eichler order of period two.
 \end{prop}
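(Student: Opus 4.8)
The plan is to exploit the fact (Lemma~\ref{21}(2)) that every $\calO$-order containing a standard monomial order is again a standard monomial order of the same shape. Thus the orders containing $R=\Mat_n(\calO_D,\ul m)$ are exactly the $R''=\Mat_n(\calO_D,\ul m'')$ whose level is \emph{dominated}, $m''_{ij}\le m_{ij}$ for all $i,j$, and satisfies the order condition. This turns the Bass property into a purely combinatorial statement about dominated level matrices. As a preliminary reduction, $R$ is one of its own over-orders, so $R$ Bass forces $R$ Gorenstein; by Theorem~\ref{42} an upper triangular Gorenstein order is Eichler, so I may assume throughout that $R$ is an Eichler order of period $t$ with parameter $a$ (where $a=1$ characterizes the hereditary case). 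The argument then splits on whether $a=1$ and whether $t\le 2$.

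For the implication that Bass forces $R$ to be hereditary or of period two, the only case to exclude is $a\ge 2$ with $t\ge 3$, and I will do this by producing a non-Gorenstein over-order. I take $\ul m''$ to agree with $\ul m$ except that the block coupling the second period-block to the first is lowered from $a$ to $1$. Since $a\ge1$ this is dominated, and because the entries of $\ul m''$ lie in $\{0,1,a\}$, a short case analysis on the period-block indices verifies $m''_{ik}\le m''_{ij}+m''_{jk}$; the point is that whenever $m''_{ik}=a$ one cannot route $i\to j\to k$ through two entries both $\le 1$. The resulting $R''$ is still upper triangular, but as $t\ge 3$ its strictly-lower entries now take the two distinct positive values $1$ and $a$, so $R''$ is not Eichler and hence, by Theorem~\ref{42}, not Gorenstein. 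This contradicts Bass, forcing $t\le 2$; that is, $R$ is maximal or a period-two Eichler order.

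For the converse, hereditary orders are Bass by the inclusion recalled in the introduction, so it remains to show a period-two Eichler order $R$ (block sizes $k_1,k_2$, parameter $a$) is Bass, which is the technical heart. Given an arbitrary dominated level $\ul m''$, I apply the order condition with a middle index lying \emph{inside} one of the two blocks: for $i,i'$ in the second block and $j$ in the first, the pair of inequalities $m''_{ij}-m''_{i'j}\le m''_{ii'}\le 0$ and its transpose force both $m''_{ii'}=0$ and the lower-left block to be constant down columns; the symmetric computation handles rows and the first block. Hence every off-diagonal entry within each block vanishes, and the two cross blocks are constant, say with values $b$ (lower-left) and $c$ (upper-right); domination gives $b\le a$, $c\le 0$, and the within-block condition gives $b+c\ge 0$. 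Conjugating by the diagonal element of $\widetilde{W}$ equal to $\pi^{c}$ on the second block and $1$ on the first normalizes $c$ to $0$ and turns the lower-left value into $b+c\in\{0,\dots,a\}$: the result is a positive-type period-two Eichler order, or a maximal order when $b+c=0$. Both are Gorenstein (by Theorem~\ref{33}), so every over-order of $R$ is Gorenstein and $R$ is Bass.

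The main obstacle is precisely this collapsing step in the converse: showing that the order condition is rigid enough to force an arbitrary dominated level matrix into the constant-block normal form. Once that is established, the torus normalization and the appeals to Theorems~\ref{42} and~\ref{33} are routine, and the explicit over-order constructed in the $t\ge 3$ case supplies the matching obstruction.
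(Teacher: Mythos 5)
Your proof is correct and follows essentially the same route as the paper: reduce via Theorem~\ref{42} to Eichler orders of period $t$, rule out $a\ge 2$, $t\ge 3$ by exhibiting an upper triangular over-order whose strictly lower entries take two distinct positive values (the paper keeps all blocks at $1$ except a bottom-left block of $2$'s, you keep all blocks at $a$ except the $(2,1)$ block lowered to $1$ --- the same idea), and then show period-two Eichler orders are Bass by classifying their over-orders. Your treatment of that last step is actually more complete than the paper's, which merely asserts (``Clearly\dots'') that every over-order again has the constant-block form with $0\le b\le a$; your collapsing argument plus torus normalization justifies this and correctly handles over-orders whose upper-right block is a negative constant, which the paper's literal statement glosses over.
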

\begin{proof}
We prove the if part. This is obvious if $R$ is hereditary.
Suppose $R$ is an Eichler order of
period two. We may assume $R =
\Mat_n(\calO_D, \ul
  m')$ with
\[ \ul m'= \left[
\begin{array}{cc}
{\bf 0}^{k_1}_{k_1} &  {\bf 0}^{k_1}_{k_2} \\
a\, {\bf 1}^{k_2}_{k_1} & {\bf 0}^{k_2}_{k_2}
\end{array} \right],\]
where $k_1,k_2, a$ are positive integers with $k_1+k_2=n$.
Clearly $R$ is Gorenstein and any overring $S\supset R$
is $\Mat_n(\calO_D, \underline{w})$ with
\[ \underline{w}= \left[
\begin{array}{cc}
{\bf 0}^{k_1}_{k_1} & {\bf 0}^{k_1}_{k_2} \\
b\, {\bf 1}^{k_2}_{k_1} & {\bf 0}^{k_2}_{k_2}
\end{array} \right] \]
for some non-negative integer $b\leq a$, which is also
Gorenstein. This shows that $R$ is Bass.


 Conversely, suppose $R$ is Bass.  As $R$ is Gorenstein,  by
 Theorem~\ref{42}, $R$ is isomorphic to an Eichler
 order of period $t.$ If $t=1$, then $R$ is a maximal order.
 Assume $t>1$.
 Without lose of generality,
 we may assume $R = \Mat_n(\calO_D, \underline{m}')$ where
\[ \ul m'= \left[
\begin{array}{cccc}
 {\bf 0}^{k_1}_{k_1} & {\bf 0}^{k_1}_{k_2} & \cdots &  {\bf
   0}^{k_1}_{k_t}  \\
a\, {\bf 1}^{k_2}_{k_1} &  {\bf 0}^{k_2}_{k_2} &  \cdots & {\bf
   0}^{k_2}_{k_t}  \\
\vdots &  \vdots & \ddots & \vdots \\
a\, {\bf 1}^{k_t}_{k_1} & a\,  {\bf 1}^{k_t}_{k_2} &  \cdots &
{\bf 0}^{k_t}_{k_t}
\end{array} \right],
\]
 for some $a \in \Z_{>0}.$ If $a = 1,$ then $R$ is hereditary,
 which is a Bass order. The case $t=2$ is also possible.
 We need to show that $R$ is not Bass if $a > 1$ and $t \geq 3$. In
 this case, take $S = \Mat_n(\calO_D,
 \underline{w})$ with
 \[ \ul w= \left[
\begin{array}{cccc}
 {\bf 0}^{k_1}_{k_1} &  {\bf 0}^{k_1}_{k_2} & \cdots &
  {\bf 0}^{k_1}_{k_t}  \\
 {\bf 1}^{k_2}_{k_1} &   {\bf 0}^{k_2}_{k_2} &  \cdots &
  {\bf 0}^{k_2}_{k_t}  \\
\vdots &  \vdots & \ddots & \vdots \\
 {\bf 1}^{k_{t-1}}_{k_1} &  {\bf 1}^{k_{t-1}}_{k_2} &  \cdots
&  {\bf 0}^{k_{t-1}}_{k_t} \\
2\, {\bf 1}^{k_t}_{k_1} &  {\bf 1}^{k_t}_{k_2} &  \cdots
&  {\bf 0}^{k_t}_{k_t}
\end{array} \right].
\] Then $S \supseteq R$ and $S$ is an $\mathcal{O}$-order. But $S$ is
  not
Gorenstein, by Theorem \ref{42}. This proves the proposition. \qed
\end{proof}

\begin{lemma}\label{52}
 Let $R = \Mat_n(\calO_{D},\ul m)$ be a Bass order with level
 $\underline{m}.$ Take $R' = \Mat_n(\calO_{D},\ul m')$, where
 $\underline{m}' = (m'_{ij})$ is defined by
 $m'_{ij} = 1$
 if $m_{ij} \neq 0$ and $m'_{ij} = 0 $ if $m_{ij} = 0.$ Then $R'$ is a
 Bass order.
\end{lemma}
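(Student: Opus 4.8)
The plan is to show that $R'$ is nothing more than an overorder of $R$, after which the Bass property descends to $R'$ essentially by definition. First I would record that $R'$ is genuinely an $\calO$-order. Since $R$ is of positive type we have $m_{ij}\ge 0$; clearly $m'_{ii}=0$, and whenever $m'_{ik}=1$ we have $m_{ik}\ge 1$, so the order condition $m_{ik}\le m_{ij}+m_{jk}$ forces $m_{ij}\ge 1$ or $m_{jk}\ge 1$. Hence $m'_{ij}+m'_{jk}\ge 1=m'_{ik}$, and when $m'_{ik}=0$ the inequality is automatic; thus $\ul m'$ satisfies the order condition (\ref{eq:21}) and $R'=\Mat_n(\calO_D,\ul m')$ is a ring.

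The key observation is that $\ul m'\le \ul m$ entrywise. Under the positive-type assumption each entry $m_{ij}$ is either $0$, in which case $m'_{ij}=0=m_{ij}$, or $\ge 1$, in which case $m'_{ij}=1\le m_{ij}$. Consequently $\grP^{m'_{ij}}\supseteq \grP^{m_{ij}}$ for all $i,j$, which yields the module inclusion $R\subseteq R'$. So $R'$ is an $\calO$-order containing $R$.

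The conclusion is then immediate from the definition of a Bass order. By hypothesis every $\calO$-order containing $R$ is Gorenstein. Since any $\calO$-order $S$ with $S\supseteq R'$ also satisfies $S\supseteq R$, every such $S$ is Gorenstein, and therefore $R'$ is Bass. In other words, the real content of the lemma is the elementary fact that the flattening of a positive-type monomial order is an overorder of it, combined with the trivial remark that an overorder of a Bass order is again Bass.

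I do not expect a genuine obstacle here: the only point requiring care is the verification that $\ul m'$ satisfies the order condition, i.e.\ that $R'$ is really a ring, and this is exactly where positive type is used. Everything else is formal. If one prefers not to assume positive type at the outset, one first normalizes $\ul m$ by Lemma~\ref{22}; I would emphasize, however, that the flattening must be applied to a positive-type representative, since it is precisely the inequality $m'_{ij}\le m_{ij}$, and hence the inclusion $R\subseteq R'$, that can fail if some $m_{ij}$ is negative.
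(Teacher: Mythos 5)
Your proposal is correct and takes essentially the same route as the paper: the paper's proof likewise consists of verifying the order condition for $\ul m'$ (with the identical case analysis on $m'_{ik}=0$ versus $m'_{ik}=1$), and it leaves implicit both the entrywise inequality $m'_{ij}\le m_{ij}$ giving $R\subseteq R'$ and the formal fact that an overorder of a Bass order is Bass. Your write-up simply makes those implicit steps explicit, including the correct observation that the standing positive-type assumption is what guarantees the containment $R\subseteq R'$.
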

\begin{proof}
It suffices to check that $R'$ is an order, that is, the inequalities
 $m'_{ij} + m'_{jk} \geq m'_{ik}$ hold
for all $ 1 \leq i,j,k \leq n.$ If $m'_{ik} = 0,$ then this is
obvious. Suppose $m'_{ik} = 1.$ Since $ 0 < m_{ik} \leq m_{ij} +
m_{jk},$ we have $m_{ij} > 0$ or $m_{jk} > 0$, which implies $m'_{ij}
= 1$ or $m'_{jk} = 1.$ Therefore, we have $1 = m'_{ik} \leq m'_{ij} +
m'_{jk}.$ \qed
\end{proof}

\begin{prop}\label{53}
Let $R = \Mat_n(\calO_{D},\ul m)$ be a monomial order of level
$\underline{m} = (m_{ij})$ with each entry $m_{ij} \in
\{0,1\}.$ If $R$ is Gorenstein, then $R$ is, up to conjugation, of
upper triangular type.
 \end{prop}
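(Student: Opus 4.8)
The plan is to push the Gorenstein hypothesis through the criterion of Theorem~\ref{33} and extract enough combinatorial rigidity from $\underline m$ to write down an element of the extended Weyl group $\widetilde W$ that conjugates $R$ into strict lower triangular form.

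First I would specialize Theorem~\ref{33} to the case $m_{ij}\in\{0,1\}$. For a fixed $i$ the test vector $v^{(i)}=[-m_{i1}+c(i),\dots,-m_{in}+c(i)]^{t}$ must coincide with a column of $\underline m$, which is a $\{0,1\}$-vector; comparing the $i$-th coordinate (where $m_{ii}=0$) forces $c(i)=m_{i,k(i)}\in\{0,1\}$ for the matching column index $k(i)$. This yields a dichotomy: either (a) the $i$-th row of $\underline m$ is zero and $\underline m$ has a zero column, or (b) $c(i)=1$ and the $k(i)$-th column is the entrywise complement of the $i$-th row. Writing $i\preceq j$ whenever $m_{ij}=0$ --- a preorder, by the order condition~(\ref{eq:21}) --- alternative (b) says that $\{j:m_{j,k(i)}=0\}=\{j:m_{ij}=1\}$, i.e. the down-set of $k(i)$ for $\preceq$ is the complement of the up-set of $i$. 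Upper triangularity is in turn the statement that, after a suitable conjugation by $\widetilde W$, the relation $\preceq$ becomes a \emph{total} preorder; note that a diagonal conjugation $\mathrm{diag}(\pi^{a_1},\dots,\pi^{a_n})$ changes which entries vanish, and so can reorient a pair with $m_{ij}=m_{ji}=1$.

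I would then argue by induction on $n$, in the spirit of the proof of Theorem~\ref{42}, peeling off one class of $\preceq$. A zero column of $\underline m$ is a greatest class for $\preceq$; using a permutation in $\widetilde W$ to move it to the last block and Lemma~\ref{22} to normalise, the complementary rows and columns define a smaller monomial order to which Theorem~\ref{33} again applies, and one reassembles after clearing the off-diagonal block by a diagonal conjugation. The dichotomy above is what keeps the induction fed: a zero row forces a zero column, hence a distinguished class to remove, and the inductive hypothesis then puts the smaller block in strict lower triangular form.

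The step I expect to be the crux --- and the part I would have to check most carefully --- is showing that such a peelable class is always available, equivalently that the complement condition in (b) forces the columns $C_k=\bigoplus_i\mathfrak{P}^{m_{ik}}$ of $R$ to form a single chain of $\calO_D$-lattices up to homothety, since it is exactly this chain condition that makes $R$ conjugate to a strict lower triangular order. The delicate point is to control the ``balanced'' patterns in which $\preceq$ has neither a least nor a greatest class, and to decide whether the incomparable pairs can be simultaneously reoriented by a single potential $\mathrm{diag}(\pi^{a_i})$. For arbitrary nonnegative entries this already fails --- \S\ref{sec:34} exhibits Gorenstein orders that are genuinely non-triangular --- so the argument must spend the binary hypothesis $m_{ij}\in\{0,1\}$ precisely here, converting the complement condition of (b) into a linear ordering of the columns.
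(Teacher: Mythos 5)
Your opening move and overall skeleton match the paper's: specializing Theorem~\ref{33} to entries in $\{0,1\}$ gives exactly your dichotomy (a)/(b), and the paper's proof of Proposition~\ref{53} is likewise an induction that peels off a zero column. But the step you flag as the crux is a genuine gap, not a technicality, and your scheme as described cannot fill it: you obtain a peelable class (a zero column) only from alternative (a), i.e.\ from a zero row, and a Gorenstein order with entries in $\{0,1\}$ may have \emph{no} zero row and \emph{no} zero column at all --- every index can fall into your case (b). Concretely, for $n=3$ take
\[ \ul m=\left[\begin{array}{ccc} 0&1&0\\ 1&0&1\\ 0&1&0 \end{array}\right], \]
or for $n=4$ take the two classes $\{1,2\},\{3,4\}$ with $m_{ij}=1$ exactly when $i,j$ lie in different classes. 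Both satisfy the order condition (\ref{eq:21}), both are Gorenstein by Theorem~\ref{33} (each row's complement is a column, with $c(i)=1$ for every $i$), and neither has a zero row or column, so your induction never starts. (Both are diagonal conjugates of Eichler orders of period two with $a=2$, so the proposition is true for them --- but your argument does not reach them.)

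The missing idea, which is the first line of the paper's proof, is to spend the diagonal conjugation at the \emph{beginning} rather than at the reassembly stage: by Lemma~\ref{22} one may assume the first row of $\ul m$ is zero, and then Theorem~\ref{33} applied to $i=1$ produces a constant test vector $[c(1),\dots,c(1)]^t$ which must equal a column of $\ul m$; since every column of $\ul m$ has a vanishing diagonal entry, $c(1)=0$ and that column is zero. Note that this step uses no binary hypothesis at all, so your expectation that $m_{ij}\in\{0,1\}$ must be spent ``precisely here'' is off target: in the paper it is spent afterwards, in forcing the nonzero entries of the last row to equal $1$ (hence the peeled-off lower-left block to be all ones) and in keeping the smaller order $\Mat_k(\calO_D,\ul m_1)$ inside the inductive class. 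One caveat if you rebuild the proof along these lines: the normalization of Lemma~\ref{22} can push a $\{0,1\}$-level matrix out of binary form (normalizing the $3\times 3$ example above produces a level matrix with entries equal to $2$), a point the paper itself passes over silently; so the induction hypothesis must be formulated with some care --- normalize, extract the zero column and the block decomposition, and only then return to a binary representative --- rather than assuming binariness survives the normalization.
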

\begin{proof}
By Lemma~\ref{22}, we can assume $m_{1i} = 0$ for all $i =
1,\ldots, n.$ Since the first row of $\underline{m}$ is zero and $R$
is Gorenstein, the $j$-th
column of $\ul m$ is zero for some $1 \leq j \leq n$.
Up to conjugation, we may assume $j = n$. So
we can assume
\[ \ul m= \left[
\begin{array}{cccccc}
0 & 0 & 0& \cdots & 0 & 0 \\
m_{21} & 0 & m_{23} & \cdots & m_{2,n-1} & 0 \\
m_{31} & m_{32} & 0 & \cdots & m_{3,n-1} & 0\\
\vdots & \vdots &\vdots &  \ddots & \vdots & 0 \\
m_{n1} & m_{n2} & m_{n3 } &\cdots & m_{n,n-1} & 0
\end{array} \right].
\]
We now prove the statement by induction on $n$.
When $n\le 3$, the order $R$ is upper triangular.
Hence we may assume that $n>3$.

For $1 \leq i, j \leq n$, one has $m_{ij} \leq m_{in} + m_{nj} = m_{nj}.$
If $m_{nj} = 0$ for some $1 \leq j \leq n,$ then the $j$-th column
of $\underline{m}$ is zero. After a suitable conjugation, we can
further assume
\[ \ul m= \left[
\begin{array}{ccccccc}
0 & 0 & \cdots & 0 & 0 & \cdots & 0 \\
m_{21} & 0  & \cdots & m_{2k} & 0 & \cdots & 0 \\
\vdots & \vdots & \ddots & \vdots & \vdots & \ddots & \vdots\\
m_{k1} & m_{k2} & \cdots & 0 & 0 & \cdots & 0 \\
m_{k+1,1} & m_{k+1,2} & \cdots & m_{k+1,k} & 0 & \cdots & 0\\
\vdots & \vdots &  \ddots & \vdots & \vdots & \ddots & \vdots \\
1 & 1 & \cdots & 1 & 0 & \cdots & 0
\end{array} \right],
\]
for some $1 \leq k \leq n-1.$ Note that the $n$-th row of
$\underline{m}$ is
$ [{1}^{(k)},  {0}^{(n-k)} ]$ ($1$ repeated $k$ times
and $0$ repeated $(n-k)$ times).
The dual  $R^{\vee}$ is isomorphic to $R'=\Mat_n(\calO_D, \ul m')$
with $\ul m'$ as (\ref{eq:41}).
The $n$-th
column of $\ul m'$ is $[0^{(k)}, {1}^{(n-k)} ]^t $.
Therefore, for some $1
\leq j \leq k$,
the $j$-th column of $\underline{m}$ is $[0^{(k)}, {1}^{(n-k)} ]^t $.
Up to
conjugation, we can assume $j = k.$ i.e.
\[ \ul m= \left[
\begin{array}{cccccccc}
0 & 0 & \cdots & 0&0 & 0 & \cdots & 0 \\
m_{21} & 0  & \cdots & m_{2,k-1}& 0 & 0 & \cdots & 0 \\
\vdots & \vdots & \ddots & \vdots & \vdots & \vdots & \ddots & \vdots\\
m_{k1} & m_{k2} & \cdots &m_{k,k-1} & 0 & 0 & \cdots & 0 \\
m_{k+1,1} & m_{k+1,2} & \cdots &m_{k+1,k-1} & 1 & 0 & \cdots & 0\\
\vdots & \vdots &  \ddots &\vdots & \vdots & \vdots & \ddots & \vdots \\
1 & 1 & \cdots & 1 & 1 & 0 & \cdots & 0
\end{array} \right].
\]
For each $1 \leq i \leq n-k$ and $1 \leq j \leq k,$ we have $1 =
m_{k+i,k} \leq m_{k+i,j} + m_{jk} = m_{k+i,j}$, which implies
$m_{k+i,j} =1.$ This shows
\[ \ul m= \left[
\begin{array}{cc}
\ul m_1 & {\bf 0}^k_{n-k} \\
{\bf 1}^{n-k}_k & {\bf 0}^{n-k}_{n-k} \\
\end{array} \right],  \]
where
\[ \ul m_1= \left[
\begin{array}{ccccc}
0 & 0 & \cdots & 0 & 0\\
m_{21} & 0  & \cdots & m_{2,k-1} & 0 \\
\vdots & \vdots & \ddots & \vdots & \vdots\\
m_{k1} & m_{k2} & \cdots & m_{k,k-1} & 0
\end{array} \right]. \]

Put $R_1 := \Mat_k(\calO_{D},\ul m_1)$.
Then $R$ is Gorenstein if and only if $R_1$ is
Gorenstein. By the induction hypothesis, $\ul m_1$ is, up to
conjugate,  strict lower
triangular, and so
is $\ul m$. This proves the proposition. \qed
\end{proof}

\begin{thm}\label{54}
  Let $R=\Mat_n(\calO_D, \ul m)$ be a monomial order with level $\ul
  m=(m_{ij})$ in the central simple algebra $\Mat_n(D)$. Then $R$ is
  Bass if and only if either $R$ is a hereditary order, or $R$ is an
  Eichler order of period two.

\end{thm}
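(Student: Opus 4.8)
The plan is to reduce the general case to the upper triangular case already settled in Proposition~\ref{51}, using the collapse construction of Lemma~\ref{52} together with Proposition~\ref{53}. The \emph{if} part is immediate: a hereditary order and an Eichler order of period two are, by definition, conjugate to an upper triangular monomial order, so Proposition~\ref{51} shows they are Bass. The whole difficulty lies in the converse, and in particular in transporting the upper triangular property from the collapse back to $R$ itself.

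For the converse, suppose $R$ is Bass. First I would invoke Lemma~\ref{22} to conjugate $R$ into positive type with vanishing first row, i.e. $m_{ij}\ge 0$ for all $i,j$ and $m_{1j}=0$ for all $j$; conjugation preserves the Bass property, so this is harmless. Let $R'=\Mat_n(\calO_D,\ul m')$ be its collapse, with $m'_{ij}=1$ when $m_{ij}\ne 0$ and $m'_{ij}=0$ otherwise. Since $R$ has positive type we have $m'_{ij}\le m_{ij}$, hence $R'\supseteq R$; because every overorder of an overorder of $R$ is an overorder of $R$ and thus Gorenstein, the overorder $R'$ is again Bass, in particular Gorenstein. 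This is exactly the content of Lemma~\ref{52}, and it is here that positive type is needed.

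The key observation I would make is that, since the first row of $\ul m$ is zero, the first row of $\ul m'$ is zero as well. Therefore, when the argument of Proposition~\ref{53} is run on $R'$, the initial normalization by Lemma~\ref{22} is vacuous, and \emph{every} remaining conjugation used there --- moving zero columns into standard position, both at the first stage and throughout the induction on the top-left block, whose first row again vanishes --- is a permutation in $S_n\subset\wt W$. Thus Proposition~\ref{53} supplies a permutation $\sigma$ with $\sigma R'\sigma^{-1}$ upper triangular. Now the collapse is an entrywise operation, so it commutes with permutation conjugation, giving $(\sigma R\sigma^{-1})'=\sigma R'\sigma^{-1}$; and since a monomial order is upper triangular precisely when its zero pattern is (and the collapse has the same zero pattern as $R$), the upper triangularity of $\sigma R'\sigma^{-1}$ forces $\sigma R\sigma^{-1}$ to be upper triangular. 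Then $\sigma R\sigma^{-1}$ is an upper triangular Bass order, so by Proposition~\ref{51} it is hereditary or an Eichler order of period two, and hence so is $R$.

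The hard part is exactly the transfer just described. A general element of $\wt W$ carries a diagonal factor $\mathrm{diag}(\pi^{a_1},\dots,\pi^{a_n})$, and conjugation by such a factor neither commutes with the collapse nor preserves the zero pattern (for instance it sends $\ul m=\bigl[\begin{smallmatrix}0&0\\5&0\end{smallmatrix}\bigr]$ to $\bigl[\begin{smallmatrix}0&2\\3&0\end{smallmatrix}\bigr]$), so one cannot naively pull back the conjugation furnished by Proposition~\ref{53}. The remedy is the preliminary normalization to vanishing first row: it forces the collapse $R'$ to already have vanishing first row, which is what reduces Proposition~\ref{53}'s conjugations to pure permutations and makes the reduction to Proposition~\ref{51} go through.
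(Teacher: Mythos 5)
Your proof is correct and takes essentially the same route as the paper's: collapse the level matrix to $\{0,1\}$ entries via Lemma~\ref{52}, apply Proposition~\ref{53} to the collapsed order, and finish with Proposition~\ref{51}. The only difference is that you explicitly justify the transfer step --- that a permutation making $\ul m'$ strictly lower triangular does the same for $\ul m$ --- which the paper dispatches with the bare phrase ``and hence so is $\underline{m}$''; your preliminary normalization to vanishing first row, which ensures that Proposition~\ref{53} needs only permutation conjugations (and these commute with the collapse and preserve zero patterns), is precisely the right way to fill in that step.
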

\begin{proof}
If $R$ is a hereditary order or an Eichler order of period 2, then
$R$ is Bass, by Proposition~\ref{51}.
Suppose $R$ is a Bass order.
Take $R'$ as in Lemma~\ref{52}; it is a Bass order.
Proposition~\ref{53} shows that the matrix $\underline{m}'$
can be conjugated to a strict lower triangular one, 
and hence so is $\underline{m}.$
Therefore, after a suitable conjugation, we can assume that $R$ is
upper triangular.
 By Proposition~\ref{51}
 again, $R$ is either a hereditary order or an Eichler order of
 period 2. This completes the proof of the theorem. \qed
\end{proof}

\subsection{An application}
\label{sec:52}
We give an example which shows
that Bass orders are not stable under etale base change (but it is so
for quaternion algebras; 
see~\cite[p. 507]{brzezinski:order83}).
Let $D$ be the division quaternion algebra over a non-Archimedean local
field $k$. Put
\[ R= \left[
\begin{array}{cc}
\calO_D  &  \calO_D \\
 \pi_D^2 \calO_D & \calO_D
\end{array} \right],\]
which is a Bass order.
Let $k_2$ be the unique unramified field extension of $k$ and let
$\calO_2$ be the ring of integers. Then
\[ R\otimes_\calO \calO_2 \simeq \Mat_4(\calO_2, \ul m) \]
with
\[ \ul m=\left[
\begin{array}{cccc}
 0 & 0  & 0 & 0  \\
 1 & 0  & 1 & 0  \\
 1 & 1  & 0 & 0  \\
 2 & 1  & 1 & 0  \\
\end{array} \right].\]
The order $\Mat_4(\calO_2, \ul m)$ is not a Bass order because it is
contained in the order $\Mat_4(\calO_2, \ul {\tilde m})$ with
\[  {\tilde {\ul m}}=\left[
\begin{array}{cccc}
 0 & 0  & 0 & 0  \\
 1 & 0  & 0 & 0  \\
 1 & 1  & 0 & 0  \\
 2 & 1  & 1 & 0  \\
\end{array} \right],\]
which is not Gorenstein, by Theorem~\ref{42}.

\section*{Acknowledgments}

The second named author is grateful to 
J.-K. Yu for insightful discussions which
lead the present work. The authors were partially supported 
by the grants NSC 100-2628-M-001-006-MY4 and AS-98-CDA-M01.
We thank John S. Kauta for pointing out the typos 
in the proof of Theorem 3.1.


\begin{thebibliography}{10}

\def\jams{{\it J. Amer. Math. Soc.}} 
\def\invent{{\it Invent. Math.}} 
\def\ann{{\it Ann. Math.}} 
\def\ihes{{\it Inst. Hautes \'Etudes Sci. Publ. Math.}} 


\def\ecole{{\it Ann. Sci. \'Ecole Norm. Sup.}}
\def\ecole4{{\it Ann. Sci. \'Ecole Norm. Sup. (4)}}
\def\mathann{{\it Math. Ann.}} 
\def\duke{{\it Duke Math. J.}} 
\def\jag{{\it J. Algebraic Geom.}} 
\def\advmath{{\it Adv. Math.}}
\def\compos{{\it Compositio Math.}} %
\def\ajm{{\it Amer. J. Math.}}
\def\grenoble{{\it Ann. Inst. Fourier (Grenoble)}}
\def\crelle{{\it J. Reine Angew. Math.}}
\def\mrl{{\it Math. Res. Lett.}}
\def\imrn{{\it Int. Math. Res. Not.}}
\def\acad{{\it Proc. Nat. Acad. Sci. USA}}
\def\tams{{\it Trans. Amer. Math. Sci.}}
\def\cras{{\it C. R. Acad. Sci. Paris S\'er. I Math.}} 
\def\mathz{{\it Math. Z.}} 
\def\cmh{{\it Comment. Math. Helv.}}
\def\docmath{{\it Doc. Math. }}
\def\asian{{\it Asian J. Math.}}
\def\jussieu{{\it J. Inst. Math. Jussieu}}
\def\plms{{\it Proc. London Math. Soc.}}

\def\jlms{{\it J. London Math. Soc.}}
\def\blms{{\it Bull. London Math. Soc.}}
\def\manmath{{\it Manuscripta Math.}} 
\def\jnt{{\it J. Number Theory}} 
\def\ijm{{\it Israel J. Math.}}
\def\ja{{\it J. Algebra}} 
\def\pams{{\it Proc. Amer. Math. Sci.}}
\def\smfmemoir{{\it Bull. Soc. Math. France, Memoire}}
\def\bsmf{{\it Bull. Soc. Math. France}}
\def\sb{{\it S\'em. Bourbaki Exp.}}
\def\jpaa{{\it J. Pure Appl. Algebra}}
\def\jems{{\it J. Eur. Math. Soc. (JEMS)}}
\def\jtokyo{{\it J. Fac. Sci. Univ. Tokyo}}
\def\cjm{{\it Canad. J. Math.}}
\def\jaums{{\it J. Australian Math. Soc.}}
\def\pspm{{\it Proc. Symp. Pure. Math.}}
\def\ast{{\it Ast\'eriques}}
\def\pamq{{\it Pure Appl. Math. Q.}}
\def\nagoya{{\it Nagoya Math. J.}}
\def\forum{{\it Forum Math. }}
\def\tjm{{\it Taiwanese J. Math.}}
\def\rt{{\it Represent. Theory}}
\def\bordeaux{{\it J. Th\'eor. Nombres Bordeaux}}
\def\ijnt{{\it Int. J. Number Theory}}
\def\jmsj{{\it J. Math. Soc. Japan}}
\def\rims{{\it Publ. Res. Inst. Math. Sci.}}
\def\ca{{\it Comm. Algebra}}
\def\osaka{{\it Osaka J. Math.}}
\def\bams{{\it Bull. Amer. Math. Soc.}}


\def\tp{{to appear}}

\newcommand{\princeton}[1]{Ann. Math. Studies #1, Princeton
  Univ. Press}

\newcommand{\LNM}[1]{Lecture Notes in Math., vol. #1, Springer-Verlag}

\bibitem{bass:tams1962} H. Bass, Torsion free and projective
  modules. \tams~{\bf 102} (1962) 319--327.



\bibitem{brumer:thesis} A. Brumer, Structures of hereditary orders,
  \bams~{\bf 69} (1963), 721--724. addendum, ibid. {\bf 70} (1964),
  185.

\bibitem{brzezinski:order83} J. Brzezinski, On orders in quaternion
  algebras. \ca~{\bf 11} (1983), no. 5, 501--522.








\bibitem{curtis-reiner:1}
 C.~W.~Curtis and I.~Reiner, {\it  Methods of representation
 theory. Vol. I. With applications to finite groups and orders}.
 Pure and Applied Mathematics. A Wiley-Interscience Publication. John
 Wiley \& Sons, Inc., New York, 1981, 819 pp.




\bibitem{DKR:1967} Ju. A. Drozd, V.~V.~Kiricenko,  A. V.~Roiter,
   Hereditary and Bass orders. {\it Izv. Akad. Nauk SSSR
   Ser. Mat.}~{\bf 31} (1967) 1415--1436.



\bibitem{greither:jpaa1982} C.~Greither, On the two generator
  problem for the ideals of a one-dimensional ring. \jpaa~{\bf 24}
  (1982), no. 3, 265--276.

\bibitem{HPS:order} H.~Hijikata, A.~Pizer and T.~Shemanske,  Orders in
  quaternion algebras. \crelle~{\bf 394} (1989), 59--106.




\bibitem{hijikata-nishida:crelle} H.~Hijikata and K.~Nishida,
  Classification of Bass orders. \crelle~{\bf 431} (1992),
  191--220.



\bibitem{hijikata-nishida:tensor} H.~Hijikata and K.~Nishida,
When is $\Lambda_1\otimes \Lambda_2$ hereditary?
\osaka~{\bf 35} (1998), no. 3, 493--500.

\bibitem{janusz:tpo1979} G.~J.~Janusz, Tensor products of
  orders. {\it J. London Math. Soc. (2)}~{\bf 20} (1979), no. 2,
  186--192.


\bibitem{kleinert:bass1990} E.~Kleinert, Which integral group rings
  are Bass orders? \ja~{\bf 129} (1990), no. 2, 380--392.


\bibitem{levy-wiegand}
L.~S. Levy and R.~Wiegand,
Dedekind-like behavior of rings with 2-generated ideals.
\jpaa~{\bf 37} (1985), no. 1, 41--58.




\bibitem{pierce} R.~S. Pierce, {\it Associative algebras}.
    Graduate Texts in Mathematics,~{\bf 88}. Springer-Verlag, New
    York-Berlin, 1982. 436 pp.






\bibitem{reiner:mo} I.~Reiner, {\it Maximal orders}.
   London Mathematical Society Monographs, No.~{\bf 5}.~Academic Press,
   London-New York, 1975. 395 pp.

\bibitem{roggenkamp:2}
 K.~W.~Roggenkamp, Lattices over orders. II.
{\it Lecture Notes in Mathematics}, Vol. 142
Springer-Verlag, Berlin-New York 1970, 387 pp.

\bibitem{roggenkamp:bass}
K.~Roggenkamp, Bass-orders and the number of nonisomorphic
indecomposable lattices over orders. {\it Representation theory of finite
groups and related topics} (Proc. Sympos. Pure Math., Vol. XXI, 1970),
pp. 127--135. Amer. Math. Soc., Providence, R.I., 1971.


\bibitem{serre:lf} J.-P. Serre, {\it Local fields}. {\bf GTM}~{\bf
    67}, Springer-Verlag, 1979.



\bibitem{vigneras} M.-F. Vign\'eras, Arithm\'etique des alg\`ebres de
  quaternions. \LNM{800}, 1980.

\bibitem{yu:embed} C.-F. Yu,  Embeddings of fields into simple algebras:
  generalizations and applications. \ja~{\bf 368} (2012),
  1--20.





\bibitem{jk:cyclic} J.-K. Yu, Cyclic orders. Preprint January 2013. 7
  pp. (Notes prepared by C.-F. Yu).








\end{thebibliography}
\end{document}